\tikzset{->-/.style={decoration={  markings,  mark=at position #1 with
    {\arrow{>}}},postaction={decorate}}}
\tikzset{-<-/.style={decoration={  markings,  mark=at position #1 with
    {\arrow{<}}},postaction={decorate}}}
\newcommand{\mfu}{\mathbf{u}}
\newcommand{\mfv}{\mathbf{v}}
\newcommand{\mfx}{\mathbf{x}}
\newcommand{\mcA}{\mathcal{A}}
\newcommand{\mcF}{\mathcal{F}}
\newcommand{\mbN}{\mathbb{N}}
\newcommand{\mbQ}{\mathbb{Q}}
\newcommand{\mbR}{\mathbb{R}}
\newcommand{\mbT}{\mathbb{T}}
\newcommand{\mbZ}{\mathbb{Z}}
\theoremstyle{plain}
\newtheorem{theorem}{Theorem}[section]
\newtheorem{lemma}[theorem]{Lemma}
\newtheorem{proposition}[theorem]{Proposition}
\newtheorem{conjecture}[theorem]{Conjecture}
\theoremstyle{definition}
\newtheorem{definition}[theorem]{Definition}
\newtheorem{example}[theorem]{Example}
\newtheorem{remark}[theorem]{Remark}
\numberwithin{equation}{section}
\newtheorem{definition-proposition}[theorem]{Definition-Proposition}
\begin{document}

\title[Log-concavity and unimodality of cluster monomials of type $A_3$]{Log-concavity and unimodality of cluster \\monomials of type $A_3$}

\date{\today}
\author{Zhichao Chen}
\address{School of Mathematical Sciences\\ University of Science and Technology of China \\ Hefei, Anhui 230026, P. R. China}
\email{czc98@mail.ustc.edu.cn}

\maketitle

\begin{abstract}
The log-concavity of cluster variables of type $A_n$ and cluster monomials of type $A_2$ was established by Chen-Huang-Sun \cite{CHS26}. It is still a conjecture for the cluster monomials of higher rank. In this paper, we prove the log-concavity and unimodality of the cluster monomials of type $A_3$, a substantially more intricate case. Moreover, we refine and extend this conjecture by considering the unimodality and the strongly isomorphism of cluster algebras. 
\end{abstract}
\renewcommand{\thefootnote}{} 
\footnotetext{\emph{\hspace*{1.3em}Keywords}: log-concavity, unimodality, cluster monomials, type $A_3$.\\ \hspace*{1.3em}\emph{2020 Mathematics Subject Classification}: 13F60, 05E10, 05A20.} 
\renewcommand{\thefootnote}{\arabic{footnote}}
\tableofcontents
\section{Introduction}
Cluster algebras are an important class of commutative algebras characterized by multiple sets of generators and mutation relations among them. In \cite{FZ02, FZ03}, they were first introduced to investigate the total positivity of Lie groups and canonical bases of quantum groups. Since then, cluster algebras have been found to be closely connected with a wide range of areas in mathematics, including representation theory, higher Teichm{\"u}ller theory, integrable system, Poisson geometry, number theory, and combinatorics.
	 
Log-concavity is a fundamental and important property of sequences and polynomials in algebras and combinatorics, such as \cite{Sta89, BH20, HMMS22, CHS26}. It has been widely investigated in algebraic combinatoric, probability theory, algebraic geometry, and representation theory. In \cite{Oko03}, Okounkov conjectured the log-concavity of various objects arising in representation theory from a statistical physics perspective, particularly, the structure constants for many interesting basis from representation theory. 

Cluster monomials play a fundamental role in the representation theory of cluster algebras. The linear independence of the cluster monomials was conjectured by Fomin-Zelevinsky \cite{FZ04} and solved by the method of categorification \cite{IKFP13} and scattering diagrams \cite{GHKK18}. Based on the Laurent phenomenon and positivity in \cite{FZ02, FZ03,GHKK18}, we are interested in the log-concavity for the cluster monomials of type $A_n$, which form an atomic theta basis \cite{Man17, GHKK18}.  Based on \cite{FG09, Shen14}, the theta basis can also reflect many geometric properties in a cluster convex hull, especially of type $A_n$. In \cite{CHS26}, the log-concavity of cluster variables, $F$-polynomials of type $A_n$ and cluster monomials of type $A_2$ was first established, by use of triangulations and $T$-paths \cite{FST08, Sch08, ST09}. However, for the cluster monomials of higher rank, it is quite difficult, even for type $A_3$. This difficulty arises from the rapid growth in the number of variables and the resulting complexity. As a consequence, the following conjecture was proposed in \cite{CHS26}.
\begin{conjecture}[\Cref{An conj}]
	The cluster monomials of type $A_n\ (n\geq 3)$ are log-concave.
\end{conjecture}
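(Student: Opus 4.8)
The plan is to prove the full conjecture for all $n \geq 3$ by induction on the rank $n$, taking the type $A_3$ result established in this paper, together with the type $A_2$ and cluster-variable results of \cite{CHS24}, as the base cases. The inductive engine will be the mutation structure of the cluster algebra: every cluster monomial of type $A_n$ is reached from the initial seed by a finite sequence of mutations governed by the binomial exchange relations, and every triangulation of an $(n+3)$-gon realizing a cluster contains an \emph{ear} diagonal whose removal exhibits a type $A_{n-1}$ subpolygon. First I would make precise, as in \cite{CHS24}, that the statement concerns the coefficient sequence of the numerator of the Laurent expansion of each cluster monomial (for a single cluster variable, essentially its $F$-polynomial), graded as in \cite{CHS24}; the Laurent phenomenon and positivity of \cite{FZ02, GHKK18} guarantee these coefficients are nonnegative, so that the property to be propagated is log-concavity with no internal zeros.

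Second, I would set up a uniform combinatorial model valid for all $n$. Using triangulations and $T$-paths \cite{FST08, Sch08, ST09}, each cluster variable of type $A_n$ expands as the perfect-matching generating function of a snake graph, and a cluster monomial $\prod_{\gamma} x_\gamma^{a_\gamma}$ — a product of compatible cluster variables from a single cluster — expands as a product of such matching polynomials sharing the tile variables of the fixed initial triangulation. This reduces the conjecture to a purely combinatorial statement: the coefficient sequence of a product of snake-graph matching polynomials, read along the given grading, is log-concave. The atomic case of a single snake graph is exactly the cluster-variable log-concavity already known for $A_n$; what is genuinely new in the monomial case is the behaviour of the product.

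The main obstacle, and the heart of the inductive step, is that log-concavity is not preserved under multiplication of general multivariate polynomials, so controlling the product is the crux. My strategy to overcome this is to strengthen the atomic statement from bare log-concavity to the Lorentzian property in the sense of \cite{BH20}: Lorentzian polynomials are closed under products and under the homogenization used to pass between the multivariate Laurent expansion and its graded coefficient sequence, and a Lorentzian polynomial has log-concave coefficients along every coordinate line. Thus I would prove that the homogenized numerator of each cluster variable of type $A_n$ is Lorentzian — a statement about the signature of its Hessian, which for snake graphs is governed by the transfer-matrix recursion of perfect matchings — and then deduce the monomial case by closure under products.

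Finally, to make the induction on $n$ run, I would exploit the strongly isomorphic cluster subalgebras indexed by ear removal, in the spirit of the refinements announced in the abstract: the subpolygon obtained by deleting an ear carries a type $A_{n-1}$ cluster structure strongly isomorphic to a distinguished subalgebra of the $A_n$ algebra, and log-concavity is invariant under such strong isomorphisms. Writing a general $A_n$ cluster monomial as a monomial supported on the $A_{n-1}$ subalgebra times the bounded contribution of the ear diagonal, the inductive hypothesis handles the first factor while the exchange relation across the ear handles the second, and the Lorentzian closure recombines them. The step I expect to be genuinely hard is verifying the Lorentzian Hessian-signature property uniformly in $n$: the snake graphs of type $A_n$ grow in length and can be arbitrarily \emph{bent}, so the transfer-matrix analysis must be shown to preserve the required $2 \times 2$-minor sign conditions independently of the number of tiles, and it is precisely this uniform control that does not follow from the $A_3$ computation alone.
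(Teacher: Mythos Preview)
The statement you are addressing is a \emph{conjecture}, not a theorem: the paper establishes only the case $n=3$ (\Cref{thm: main result}) and explicitly leaves $n\ge 4$ open, restating and extending the question as \Cref{general conj}. \Cref{rmk: higher rank} says outright that the $A_3$ arguments do not extend to higher rank. There is therefore no proof in the paper to compare against for general $n$.

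Your proposal contains genuine gaps. The entire inductive step hinges on the claim that the homogenized numerator of every type-$A_n$ cluster variable is Lorentzian in the sense of \cite{BH20}, but you do not prove this; you assert it is ``governed by the transfer-matrix recursion of perfect matchings'' and then concede in the final paragraph that verifying the Hessian-signature condition uniformly in $n$ is ``genuinely hard.'' Since log-concavity is not closed under products (\Cref{not log}), some strengthening such as the Lorentzian property is indeed required for your multiplication-based strategy, but that strengthening is the whole content of the argument and is left unproved. What you have written is a research programme, not a proof.

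Two further issues compound this. First, you invoke invariance of log-concavity under strong isomorphism to normalise the seed after ear removal, but that invariance is precisely part (2) of \Cref{general conj} and is itself open for $n\ge 4$; using it as an ingredient is circular. Second, the ear-removal factorisation is never made precise: the arcs of a triangulation incident to the removed ear vertex do not restrict to cluster variables of the $A_{n-1}$ subalgebra in any evident way, their Laurent expansions still involve the initial variable at the deleted vertex, and the phrase ``bounded contribution of the ear diagonal'' does not name an actual factor of the Laurent polynomial. Without a concrete splitting lemma the induction does not run, independently of the Lorentzian issue.
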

For this purpose, we aim to solve this conjecture for the cluster monomials of type $A_3$ in this paper, which is a substantially more intricate case. We first study the permutation and compatibility between seeds and cluster mutations (\Cref{lem: permutation} \& \Cref{lem: sign}). In addition, we introduce the novel notions of unimodal Laurent polynomials and cluster monomials, see \Cref{def: unimodal} and \Cref{def: cm}. Then, we prove the unimodality of the same types considered in \cite{CHS26}.
\begin{proposition}[\Cref{prop: A3 unimodal}]
	The cluster variables of type $A_n$ and the cluster monomials of type $A_2$ are unimodal.
\end{proposition}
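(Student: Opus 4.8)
The plan is to leverage the log-concavity results already established in \cite{CHS24} and upgrade them to unimodality. Recall that for a Laurent polynomial $P$ in the initial cluster variables, after fixing all but one variable $x_i$, one obtains a univariate Laurent polynomial $\sum_k c_k x_i^k$ with nonnegative coefficients (by the positivity theorem), and \emph{unimodality} in the sense of \Cref{def: unimodal} should mean that along each such one-parameter slice (indeed along each lattice direction in the exponent lattice) the coefficient sequence is unimodal. The key elementary fact is that a finite sequence of positive reals that is log-concave and has no internal zeros is automatically unimodal: if $c_{k-1}, c_k, c_{k+1} > 0$ and $c_k^2 \ge c_{k-1}c_{k+1}$ throughout the support, then the ratios $c_{k+1}/c_k$ are non-increasing, so once the sequence starts to decrease it never increases again. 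Thus the entire content of the proposition reduces to (i) citing log-concavity from \cite{CHS24}, and (ii) verifying that the relevant coefficient sequences have no internal zeros (no gaps in their support).

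First I would recall precisely the statements from \cite{CHS24}: the cluster variables of type $A_n$ and the cluster monomials of type $A_2$, expressed as Laurent polynomials in any initial seed, are log-concave along coordinate directions (and, for the $T$-path / triangulation model, one actually controls each variable separately). Then, for step (ii), I would argue that the Newton polytope of each such cluster variable or cluster monomial is a product of segments, or more precisely that the support, intersected with any line in a coordinate direction, is an interval. For cluster variables of type $A_n$ this follows from the explicit $T$-path expansion of \cite{Sch08, ST09, FST08}: the exponents of a given $x_i$ that occur range over a contiguous set because $T$-paths can be modified one step at a time, each modification changing the relevant exponent by exactly $1$; alternatively one can use that the $F$-polynomial of a type $A_n$ cluster variable has saturated Newton polytope (a consequence of the fact that these $F$-polynomials are products/sums controlled by the snake-graph or $T$-path combinatorics, and each is a "nice" polynomial with interval-shaped support in each variable). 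For cluster monomials of type $A_2$ one multiplies at most two such cluster variables together, and a product of two Laurent polynomials each with interval support in $x_i$ again has interval support in $x_i$ (the support of a product of two intervals of integers is an interval). Combining: along every coordinate slice we have a positive, log-concave, internally-zero-free sequence, hence unimodal.

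The remaining point requiring care is to make sure the notion of unimodality in \Cref{def: unimodal} is exactly the one-dimensional-slices notion, and to handle the monomial factors $x_i^{a_i}$ (with $a_i < 0$ possible) that appear in a cluster monomial: multiplying a Laurent polynomial by a monomial merely shifts the exponent, so it preserves both log-concavity and the no-internal-zeros property, hence unimodality. I would also double-check the degenerate cases — a single cluster variable that is itself a monomial (e.g. an initial variable), or a cluster monomial supported on a single point — which are trivially unimodal. The main obstacle I anticipate is step (ii): establishing the no-internal-zeros property cleanly. If the $T$-path combinatorics does not give it transparently, the fallback is to invoke that type $A_n$ cluster-variable $F$-polynomials are known (via snake graphs, \cite{MSW11}-type expansions, or the explicit formulas in \cite{CHS24}) to have full-dimensional saturated Newton polytopes of the form $\sum [0, m_i] e_i$ up to unipotent shear, from which the interval property in each direction is immediate; one then only needs to check that the shear sending $F$-polynomial exponents to cluster-variable exponents preserves "interval in a coordinate direction", which holds because the relevant linear map is unipotent upper-triangular with respect to a suitable ordering. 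Once internal-zero-freeness is in hand, the deduction of unimodality from log-concavity is a two-line argument.
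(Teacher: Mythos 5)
Your proposal is correct and follows essentially the same route as the paper: the paper's proof likewise reduces unimodality to the combination of log-concavity (cited from \cite{CHS24}) with nonnegativity and the absence of internal zeros, the latter being observed from the explicit expansions in \cite{CHS24}, and then applies the lemma that a nonnegative log-concave Laurent polynomial with no internal zeros is unimodal (\Cref{lem: log is uni}). Your additional discussion of $T$-paths and Newton polytopes is a more detailed justification of the no-internal-zeros step, which the paper handles simply by direct calculation.
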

We also introduce a combinatorial method by using the binomial coefficients and the convolution, see \Cref{def: convo} and \Cref{prop: LU}, which implies that the log-concavity can be kept under the convolution. We have already known that by taking different initial seeds, the clusters of the same type $A_3$ may have different expressions. In fact, they are the same up to the strongly isomorphism defined in \cite{FZ03}. However, to prove the log-concavity and unimodality of all the cluster monomials of type $A_3$, we can reduce to three special types in \Cref{three cases}. 
 
	 \begin{proposition}[\Cref{prop: classification}]	All the cluster monomials of type $A_3$ are log-concave (or unimodal) if and only if the cluster monomials of type $A_3$ with the initial seeds in \Cref{three cases} are log-concave (or unimodal).
\end{proposition}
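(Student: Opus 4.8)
The plan is to reduce the verification of log-concavity/unimodality over \emph{all} cluster monomials of type $A_3$ to the three distinguished initial seeds listed in \Cref{three cases}, using the fact that cluster algebras of the same Dynkin type are the same up to the strongly isomorphism of \cite{FZ03}. The "if" direction is the nontrivial content; the "only if" direction is immediate since the seeds in \Cref{three cases} are particular instances of seeds of type $A_3$.

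For the "if" direction, I would proceed as follows. First, recall that a type $A_3$ cluster algebra is determined by its exchange matrix, and there are only finitely many mutation classes of skew-symmetrizable $3\times 3$ matrices of Dynkin type $A_3$; concretely these correspond to the quivers obtained from orientations of the $A_3$ Dynkin diagram, i.e. the linear orientations $1\to 2\to 3$, the alternating/zigzag orientation, and — if one allows the underlying graph to be relabeled — essentially a short list that collapses to the three cases in \Cref{three cases} after accounting for quiver isomorphism. So the first step is to enumerate these initial seeds and check that every seed of type $A_3$ is mutation-equivalent, and in fact strongly isomorphic (via a permutation of indices composed with a sequence of mutations), to one of the three in \Cref{three cases}. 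This is where \Cref{lem: permutation} and \Cref{lem: sign} enter: they record precisely how cluster mutations interact with relabelings of the seed, so that a strongly isomorphic copy of the cluster algebra produces cluster monomials that are obtained from the original ones by permuting the roles of the initial variables $x_1,x_2,x_3$ (and possibly of the coefficients, but since we work in the coefficient-free setting this is harmless).

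Second, I would observe that log-concavity and unimodality of a Laurent polynomial in $x_1,x_2,x_3$ — in the sense of \Cref{def: unimodal} and \Cref{def: cm} — are invariant under permuting the variables $x_1,x_2,x_3$. Indeed the defining inequalities on coefficients only involve the multiset of exponent vectors together with a fixed linear functional / grading direction, and this structure is symmetric under $S_3$ acting on the three variables. (Here one should be slightly careful: the notion in \Cref{def: cm} is defined with respect to a specific direction; one must check that the direction used is itself permutation-covariant, or that the relevant statement is "for all directions", which the definitions appear to arrange.) Combining this with the previous step: an arbitrary cluster monomial $z$ of type $A_3$ lives in a cluster algebra $\mcA$ with some initial seed; by the classification $\mcA$ is strongly isomorphic to one of the three standard algebras $\mcA_i$, and under this isomorphism $z$ corresponds to a cluster monomial $z'$ of $\mcA_i$ up to a permutation of $x_1,x_2,x_3$; since $z'$ is log-concave (resp. unimodal) by hypothesis, and these properties survive the permutation, $z$ is log-concave (resp. unimodal) as well.

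The main obstacle I anticipate is the bookkeeping in the first step: making the reduction to exactly \emph{three} cases rather than a larger list. One has to be precise about which quiver isomorphisms and which mutation sequences are allowed, verify that the three seeds in \Cref{three cases} are pairwise inequivalent yet exhaust all possibilities up to strong isomorphism, and confirm that the strong isomorphism can always be taken to act on the cluster monomials by a genuine permutation of $\{x_1,x_2,x_3\}$ (so that Step two applies verbatim) rather than by some more complicated substitution. The compatibility lemmas \Cref{lem: permutation} and \Cref{lem: sign} are exactly the tools that make this precise, so the proof is essentially an organized application of those lemmas together with the $S_3$-symmetry of the log-concavity and unimodality conditions; no new analytic input is needed beyond what has already been set up.
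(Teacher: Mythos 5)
Your proposal follows essentially the same route as the paper: enumerate the non-isomorphic quivers occurring as initial seeds of type $A_3$, use \Cref{lem: permutation} to dispose of relabelings (together with the observation that \Cref{log-concave2} and \Cref{def: unimodal} are manifestly symmetric under permuting the variables, since they quantify over all directions $j$), and use \Cref{lem: sign} to identify opposite quivers; the ``only if'' direction is indeed trivial. One correction to your first step: the relevant list is not the set of orientations of the $A_3$ Dynkin diagram but the set of quivers in the mutation class up to vertex permutation, which consists of \emph{four} quivers --- the two source/sink orientations of the path (opposite to each other, hence merged by \Cref{lem: sign}), the linear orientation, and the oriented $3$-cycle. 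The cyclic quiver is not an orientation of the Dynkin diagram, yet it is precisely case $(3)$ of \Cref{three cases} and occupies a full subsection of the main proof, so an enumeration restricted to orientations of the path would miss it; with that repaired, your argument matches the paper's.
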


Furthermore, we introduce the definitions and properties of Gauss hypergeometric functions and Jacobi polynomials, see \Cref{lem: real roots}. In integral system, these classical special functions provide an analytic framework for controlling the location of zeros. It is worth mentioning that the relations between the integral system and cluster algebras have been widely investigated in \cite{IIKKN13a, IIKKN13b, IY16, Nak24} and so on. Here, with the help of Newton inequalities (\Cref{thm: newton}) and the combinatorial method above, we obtain the following main result. 
\begin{theorem}[\Cref{thm: main result}]
	The cluster monomials of type $A_3$ are log-concave and unimodal.
\end{theorem}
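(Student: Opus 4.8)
The plan is to prove \Cref{thm: main result} by reducing the infinitely many cluster monomials to a finite list of building blocks, controlling the general case through the convolution formalism of \Cref{def: convo}, and treating the few genuinely hard building blocks by real-rootedness.

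First I would invoke \Cref{prop: classification} to reduce the whole statement to the cluster monomials of type $A_3$ expanded in the three model seeds of \Cref{three cases}. This reduction is exactly where the strong-isomorphism bookkeeping enters: \Cref{lem: permutation} and \Cref{lem: sign} identify, up to a permutation of the indices and the usual sign conventions, the mutation sequences out of an arbitrary seed with those out of one of the model seeds, and log-concavity and unimodality in the sense of \Cref{def: unimodal} and \Cref{def: cm} are preserved under such identifications. So it is enough to fix one model seed with initial cluster $\mathbf{x}=(x_1,x_2,x_3)$ and check all of its cluster monomials.

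Next, since $A_3$ is of finite type there are only finitely many cluster variables, and every cluster monomial is a product $\prod_i L_i^{a_i}$ with $a_i\in\mbZ_{\ge 0}$, the $L_i$ being the Laurent expansions of the cluster variables of one cluster in the fixed initial cluster. Passing to the one-variable polynomials attached to such Laurent polynomials by the grading built into \Cref{def: cm}, a product of cluster (Laurent) monomials corresponds to a convolution of coefficient sequences; by \Cref{prop: LU} log-concavity is stable under this convolution, and a non-negative log-concave sequence has no internal zeros and is therefore unimodal. Hence it suffices to show that each \emph{atom} --- each cluster variable, and each polynomial attached to a power $L_i^{a_i}$ that actually occurs --- is log-concave and unimodal. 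For the cluster variables themselves and for most of their powers this is immediate: the coefficients are binomial-type numbers, handled by \Cref{prop: A3 unimodal} and \Cref{prop: LU}. The remaining, extremal atoms --- those coming from the most complicated cluster variables of $A_3$ raised to powers --- give one-variable polynomials that, after an affine substitution and rescaling, I would identify with specializations of the Gauss hypergeometric series ${}_2F_1$, equivalently with Jacobi polynomials $P_n^{(\alpha,\beta)}$ with explicit parameters determined by the exponents $a_i$. By \Cref{lem: real roots} these have only real zeros in the relevant parameter range, so \Cref{thm: newton} (Newton's inequalities) forces their coefficient sequences to be ultra-log-concave, in particular log-concave and unimodal. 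Feeding this back through the convolution step finishes the fixed seed, and \Cref{prop: classification} then gives \Cref{thm: main result}.

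The hard part will be the treatment of the extremal atoms: matching these polynomials with Jacobi polynomials requires the correct hypergeometric identity together with a careful check that the parameters $(\alpha,\beta)$ produced by the cluster-monomial exponents always lie in the range where \Cref{lem: real roots} guarantees real zeros. A secondary, more laborious point is arranging the grading of \Cref{def: cm} so that it is simultaneously compatible with multiplication on all three model seeds, so that \Cref{prop: LU} can be applied uniformly; carrying this out for all clusters of $A_3$ is where the proliferation of variables and cases becomes delicate.
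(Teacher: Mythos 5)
Your reduction to the three model seeds via \Cref{prop: classification}, and your idea of using Jacobi polynomials plus Newton's inequalities for the hard cases, both match the paper. But the central step of your plan --- ``it suffices to show that each atom is log-concave and unimodal,'' justified by passing products of Laurent polynomials through \Cref{prop: LU} --- is a genuine gap. \Cref{prop: LU} is a statement about one-index sequences; the product of two \emph{multivariate} log-concave Laurent polynomials is \emph{not} in general log-concave, and the paper exhibits an explicit bivariate counterexample in \Cref{not log} precisely to explain why the theorem does not follow from the log-concavity of the individual cluster variables (\Cref{thm: log-concave of cluster variable}). So checking each factor, or each power $L_i^{a_i}$, cannot suffice; the whole difficulty of the $A_3$ case lives in the mixed products. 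A secondary error: a nonnegative log-concave sequence can have internal zeros (e.g.\ $1,1,0,0,1$), so it is not automatically unimodal; the no-internal-zeros hypothesis in \Cref{lem: log is uni} has to be verified, not deduced.

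What the paper actually does is expand each full product of cluster variables from a given cluster (after listing the clusters in \Cref{table inward}, \Cref{table straightfoward}, \Cref{table (3)}) and exploit a structural accident of type $A_3$: in every such product the exponents of $x_1$ and $x_3$ are either equal or sum to a constant, so log-concavity and unimodality in those two directions is automatic, and only the $x_2$-direction (resp.\ one remaining direction) carries content. The resulting one-variable coefficient sequence of the \emph{entire} product, e.g.\ $S_k=\sum_{i+j=N}\binom{a}{i}\binom{b}{j}\binom{N+j+c}{k}$ for $(x_1x_3+x_2+1)^a[x_1x_3+(x_2+1)^2]^b(x_2+1)^c$, has a generating function that is a monomial in $1+x$ times $\sum_j\binom{a}{N-j}\binom{b}{j}t^j$, which is identified via a Pfaff transformation with a Jacobi polynomial $P_N^{(a-N,b-N)}$ and hence is real-rooted; Newton's inequalities then give log-concavity. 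The convolution lemma \Cref{prop: LU} is used only for genuinely one-dimensional sequences $u_r=\binom{a}{r}\binom{a+b+c-r}{k}$ and $v_r=\binom{c}{r}$ in the straightforward type. If you want to salvage your plan, you must replace the ``atoms'' reduction by this kind of explicit expansion of the full products, since no general multiplicativity of multivariate log-concavity is available.
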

As a consequence, we can refine and extend the conjecture in \cite{CHS26} by considering the unimodality and strongly isomorphism of cluster algebras.
\begin{conjecture}[\Cref{general conj}] We conjecture that the following two statements hold.
	\begin{enumerate}
	\item The cluster monomials of type $A_n$ with $n\geq 4$ are log-concave and unimodal.
		\item The strongly isomorphism of cluster algebras keeps the log-concavity and unimodality of cluster monomials of type $A_n$ with $n\geq 4$. Namely, the log-concavity and the unimodality of cluster monomials are independent of the choice of the initial seed.
	\end{enumerate}
\end{conjecture}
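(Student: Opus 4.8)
The plan is to bootstrap from the type $A_3$ result (\Cref{thm: main result}) and to prove the two assertions of \Cref{general conj} in tandem, since part (2) is exactly what permits a convenient choice of initial seed in part (1). The object we test is the graded generating polynomial of a cluster monomial $m$: writing $m$ as a Laurent polynomial in the initial cluster and collecting its (positive) coefficients by total degree yields a univariate polynomial whose coefficient sequence is the one required to be log-concave and unimodal in the normalized sense of \Cref{def: convo}. First I would analyze how this polynomial behaves under a strongly isomorphism of cluster algebras of type $A_n$. Such an isomorphism is governed by a permutation of indices intertwining the exchange data, so by \Cref{lem: permutation} and \Cref{lem: sign} it commutes with mutation and carries cluster monomials to cluster monomials while acting on the initial variables by a permutation and rescaling; the induced transformation of the generating polynomial is then explicit and preserves log-concavity and unimodality. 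This establishes part (2) and, simultaneously, reduces part (1) for each of the finitely many seeds of type $A_n$ (indexed by triangulations of an $(n+3)$-gon) to a single standard seed, generalizing \Cref{prop: classification}.

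The second and decisive step is to upgrade single cluster variables from log-concavity to ultra-log-concavity. Chen--Huang--Sun \cite{CHS24} already prove that cluster variables of type $A_n$ are log-concave, but ordinary log-concavity is not stable under convolution, whereas the binomially normalized log-concavity of \Cref{def: convo} is. I would therefore show that, for the standard seed, the graded generating polynomial of each cluster variable is real-rooted, generalizing \Cref{lem: real roots}: using the $T$-path (equivalently snake-graph matching) expansion, one expresses this polynomial through Gauss hypergeometric and Jacobi polynomials, whose zeros are real and located by the classical theory, and then \Cref{thm: newton} converts real-rootedness into the ultra-log-concavity demanded by \Cref{def: convo}.

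The third step passes from variables to arbitrary cluster monomials. A cluster monomial of a fixed cluster is a product $\prod_\gamma x_\gamma^{a_\gamma}$, and since total degree is additive, its graded generating polynomial is the binomial convolution of those of the factors. By \Cref{prop: LU} this convolution preserves ultra-log-concavity, so the generating polynomial of $m$ is ultra-log-concave and hence, being supported on an interval with positive coefficients, both log-concave and unimodal. Together with the reduction of the first step, this would prove part (1) of \Cref{general conj} for every initial seed.

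The main obstacle is the real-rootedness input of the second step once $n \ge 4$. In higher rank the $F$-polynomials are genuinely multivariate and the underlying snake graphs carry several bends, so the graded polynomial need not factor through the Jacobi family and may fail to be real-rooted even when its coefficients stay ultra-log-concave. I expect that for $n \ge 4$ one must either prove ultra-log-concavity of the $T$-path counts directly, for instance by an interlacing or injection argument on matchings, or replace the univariate real-rootedness by a multivariate notion of log-concavity that is visibly closed under products and restriction. A secondary difficulty lies in part (2): when the permutation underlying the strongly isomorphism acts nontrivially on the exchange matrix, controlling the transformation of the generating polynomial is precisely where the compatibility encoded in \Cref{lem: permutation} and \Cref{lem: sign} must be pushed hardest.
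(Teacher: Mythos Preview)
The statement you are attempting is \emph{Conjecture}~\ref{general conj}; the paper does not prove it and does not claim to. On the contrary, Remark~\ref{rmk: higher rank} explicitly warns that ``the arguments developed for type $A_3$ cannot be extended directly to type $A_4$ or higher,'' and the conjecture is offered as an open problem motivated by Theorem~\ref{thm: main result}. So there is no ``paper's own proof'' to compare against, and any proposal here is necessarily speculative.

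That said, your plan has a structural mismatch with the paper's definitions that would make it incorrect even if every step went through. The log-concavity in Definition~\ref{log-concave2} is a \emph{multivariate} condition: for each coordinate direction $j$ one must have $a_{\dots,i_j,\dots}^2 \ge a_{\dots,i_j-1,\dots}\,a_{\dots,i_j+1,\dots}$ with all other indices held fixed. Your first move is to pass to the ``graded generating polynomial'' obtained by collecting coefficients by total degree; log-concavity of that univariate sequence is neither implied by nor implies the paper's multivariate condition, so proving it would not establish part~(1). Relatedly, Definition~\ref{def: convo} is simply the definition of convolution of sequences, not a ``binomially normalized'' or ``ultra'' log-concavity; Proposition~\ref{prop: LU} says ordinary log-concavity (with no internal zeros) is preserved under convolution, and the paper never invokes ultra-log-concavity. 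Your repeated appeals to these references therefore do not say what you need them to say.

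Finally, you yourself identify the decisive gap: for $n\ge 4$ the real-rootedness input fails in general, and you only \emph{expect} that a direct injection/interlacing argument or a multivariate replacement would work. That is exactly the obstruction the paper flags. As it stands the proposal is a sketch of a strategy whose key lemma is unproven and whose target (total-degree log-concavity) is not the statement being conjectured.
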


The paper is organized as follows. In \Cref{sec: pre}, we recall some basic notions about seeds, cluster algebras and cluster monomials. Then, we introduce the permutation and compatibility among them, see \Cref{lem: permutation} and \Cref{lem: sign}. In \Cref{sec: comb}, we define the log-concavity (\Cref{log-concave2}) and unimodality (\Cref{def: unimodal}) of Laurent polynomials. Based on the Laurent phenomenon, we can consider these properties for the cluster monomials, see \Cref{def: cm}, \Cref{prop: A3 unimodal} and \Cref{An conj}. We also provide some combinatorial methods by using binomial coefficients and the convolution, see \Cref{prop: LU}. In \Cref{sec: cluster monomials}, we give a sufficient condition (\Cref{prop: classification}) to determine the log-concavity and unimodality of cluster monomials of type $A_3$, see also \Cref{three cases}. In \Cref{sec: main}, by use of Gauss hypergeometric functions, Jacobi polynomials and Newton inequalities, we prove the log-concavity and unimodality of all the cluster monomials of type $A_3$ (\Cref{thm: main result}). Moreover, we give a refined and extended conjecture, see \Cref{general conj}.
\section*{Conventions}
\begin{itemize}[leftmargin=1em]\itemsep=0pt
\item We denote by $\text{Mat}_{n\times n}(\mbZ)$ the set of all $n\times n$ integer square matrices. An integer square matrix $B$ is said to be \emph{skew-symmetrizable} if there exists a positive integer diagonal matrix $D$, such that $DB$ is skew-symmetric and $D$ is called the \emph{left skew-symmetrizer} of $B$. 
\item For any $a\in \mbZ$, we denote $[a]_{+}=\max(a,0)$ and then $a=[a]_+-[-a]_+$.
\item For any $n\in \mbN$, its factorial is defined by $n! = 1\times  2\times\cdots \times n$ for $n\ge 1$, with $0! = 1$.
\end{itemize}

\section{Preliminaries}\label{sec: pre}
In this section, based on \cite{FZ02, FZ03, FZ07, Nak23}, we recall some basic notions and properties about cluster algebras, especially the ones without coefficients.
\subsection{Seeds and cluster algebras} Let $n\in \mbN_{+}$ and $\mcF$ be a rational function field of $n$ variables over $\mbQ$. A (labeled) \emph{seed} is a pair $(\mfx,B)$, such that $\mfx=(x_1,\dots,x_n)$ is an $n$-tuple of algebraically independent and generating elements of $\mcF$ and $B=(b_{ij})_{n\times n}$ is a skew-symmetrizable matrix. The $n$-tuple $\mfx$ is called the \emph{cluster}, elements $x_i\in \mfx$ are called \emph{cluster variables} and $B$ is called the \emph{exchange matrix}. For a seed $(\mfx,B)$ and $1\leq k\leq n$, we define $\mu_k(\mfx,B)=(\mfx^{\prime},B^{\prime})$ such that $\mfx^{\prime}=(x_{1}^{\prime},\dots,x_{n}^{\prime})$, where \begin{align}\label{cluster variable mutation}\ 
		x_{i}^{\prime}=\left\{
		\begin{array}{ll}
			x_{k}^{-1}(\prod\limits_{j=1}^{n}x_{j}^{[b_{jk}]_{+}}+\prod\limits_{j=1}^{n}x_{j}^{[-b_{jk}]_{+}}), &   \text{if}\ i=k, \\
			x_{i}, &  \text{if}\ i \neq k, 
		\end{array} \right.
	\end{align}
and $B^{\prime}=(b_{ij}^{\prime})_{n\times n}$ is given by 
	\begin{align} \label{matrix mutation}
		b_{ij}^{\prime}=\left\{
		\begin{array}{ll}
			-b_{ij}, &  \text{if}\ i=k \;\;\mbox{or}\;\; j=k, \\
			b_{ij}+\frac{1}{2}(b_{ik}|b_{kj}|+|b_{ik}|b_{kj}), &  \text{if}\ i\neq k \;\;
			\mbox{and}\; j\neq k. 
		\end{array} \right. 
	\end{align}
In fact, $(\mfx^{\prime},B^{\prime})$ is still a seed and $\mu_k$ is involutive, that is $\mu_k(\mfx^{\prime},B^{\prime})=(\mfx,B)$, see \cite{Nak23}. Then, $(\mfx^{\prime},B^{\prime})$ is called the \emph{k-direction mutation} of $(\mfx,B)$. Two seeds (exchange matrices) are said to be \emph{mutation-equivalent} if one can be obtained from the other by a finite sequence of mutations. The mutation equivalence class of $\Sigma$ is denoted by $\overline{\Sigma}$. A \emph{cluster pattern} $\mathbf{\Sigma}=\{(\mfx_t,B_t)|\ t\in \mbT_n\}$ is a collection of seeds which are labeled by the vertices of $n$-regular tree $\mbT_{n}$, such that $(\mfx_{t^{\prime}},B_{t^{\prime}})=\mu_k(\mfx_t,B_t)$ for any edge $t \stackrel{k}{\longleftrightarrow} t^{\prime}$ in $\mbT_n$. 

In the following, we use the notations that $\mathbf{x}_{t}=(x_{1;t},\dots,x_{n;t})$ and $B_{t}=(b_{ij;t})_{n\times n}$. Note that for an arbitrary fixed vertex $t_0\in \mbT_n$, we call the seed $(\mfx_{t_0},B_{t_0})$ \emph{initial seed} and denote by $\mathbf{x}_{t_0}=\mfx=(x_{1},\dots,x_{n})$ the \emph{initial cluster} and $B_{t_0}=B=(b_{ij})_{n\times n}$ the \emph{initial exchange matrix}.
\begin{definition}[\emph{Cluster algebra}]\label{cluster algebras}
	For a cluster pattern $\mathbf{\Sigma}$, the  \emph{cluster algebra} $\mcA=\mcA(\mathbf{\Sigma})$ is the $\mbQ$-subalgebra of $\mcF$ generated by all the cluster variables $\{x_{i;t}|\ i=1,\dots,n;t\in\mbT_{n}\}$. Here, $n$ is  the \emph{rank} of $\mcA$.\end{definition}
The most important properties of cluster algebras are the Laurent phenomenon and the positivity.
	\begin{theorem}[{\cite[Theorem 3.1]{FZ02}, \cite[Theorem 4.10]{GHKK18}}]
\label{theorem:LE}
In a cluster algebra $\mcA$, any cluster variable is a Laurent polynomial in terms of the initial cluster with nonnegative integer coefficients.
\end{theorem}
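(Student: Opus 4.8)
The statement combines two results of very different depth: the \emph{Laurent phenomenon}, that each $x_{i;t}$ lies in $\mbZ[x_1^{\pm 1},\dots,x_n^{\pm 1}]$, due to Fomin--Zelevinsky, and the \emph{positivity} of the coefficients, due to Gross--Hacking--Keel--Kontsevich. The plan is to prove these separately: Laurentness by an induction over the tree $\mbT_n$ combined with a coprimality argument for the exchange binomials, and positivity by passing to the scattering-diagram / theta-function description of the cluster algebra.

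For the Laurent phenomenon, fix the initial vertex $t_0$ and induct on the distance $d(t_0,t)$ in $\mbT_n$. The base case $t=t_0$ is immediate, since each $x_i$ is already a monomial in $\mfx$. For the inductive step, let $k$ be the direction of the first edge $t_0 \stackrel{k}{\longrightarrow} t_1$ on the unique path to $t$. Treating $t_1$ as an initial vertex, the inductive hypothesis gives $x_{i;t}\in\mbZ[\mfx_{t_1}^{\pm 1}]$; since $\mfx_{t_1}$ agrees with $\mfx$ except that $x_k$ is replaced by $x_k'=P_k/x_k$ with $P_k=\prod_j x_j^{[b_{jk}]_+}+\prod_j x_j^{[-b_{jk}]_+}$, substitution shows $x_{i;t}\in\mbZ[\mfx^{\pm 1}][P_k^{-1}]$. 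The whole content is to delete the denominator $P_k$. The key observation is that $P_k$ is a binomial not involving $x_k$ and is coprime, as an element of the unique factorization domain $\mbZ[\mfx^{\pm 1}]$, to the analogous exchange polynomial $P_l$ attached to any other direction $l\neq k$. Since for coprime non-units $f,g$ one has $\mbZ[\mfx^{\pm 1}][f^{-1}]\cap \mbZ[\mfx^{\pm 1}][g^{-1}]=\mbZ[\mfx^{\pm 1}]$, it suffices to exhibit $x_{i;t}$ in a second localization $\mbZ[\mfx^{\pm 1}][P_l^{-1}]$: the two constraints together force the denominator to be a monomial, giving $x_{i;t}\in\mbZ[\mfx^{\pm 1}]$.

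The obstacle here is that the path from $t_0$ to $t$ has only one first edge, so the naive induction yields the constraint at $P_k$ alone. This is overcome by the Caterpillar Lemma of Fomin--Zelevinsky: one embeds the mutation sequence into an auxiliary ``caterpillar'' subtree whose spine lets $x_{i;t}$ be reached from two distinct neighbors of $t_0$, so that both localizations at $P_k$ and at $P_l$ apply at once. Verifying the coprimality of the exchange binomials (and that no $x_j$ divides $P_k$) and carrying out the caterpillar bookkeeping is the main technical labor of this half, but it is elementary once set up.

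For positivity I would invoke the scattering-diagram machinery of Gross--Hacking--Keel--Kontsevich. To $B$ one attaches a consistent scattering diagram $\mathfrak{D}$ and defines theta functions $\vartheta_q$ as generating series of broken lines, each broken line contributing a monomial whose coefficient is a product of coefficients read off from the wall-crossing functions. The essential input is the \emph{positivity of the scattering diagram}, i.e.\ that every wall-crossing automorphism has nonnegative coefficients, which GHKK establish. Since each cluster variable coincides with a theta function $\vartheta_q$ for a suitable $g$-vector $q$, and theta functions expand in the initial cluster with nonnegative coefficients, the Laurent expansion of $x_{i;t}$ has nonnegative integer coefficients. (In the type $A_n$ setting of this paper the same positivity is visible combinatorially, since cluster monomials form the atomic theta basis and admit $T$-path or perfect-matching expansions with nonnegative coefficients.) The genuinely hard step throughout is positivity: once the Caterpillar Lemma is in place Laurentness is elementary, whereas the nonnegativity of coefficients requires the full scattering-diagram (or surface-combinatorial) technology.
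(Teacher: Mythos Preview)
The paper does not give its own proof of this theorem: it is simply quoted from the literature, with the Laurent phenomenon attributed to \cite[Theorem~3.1]{FZ02} and positivity to \cite[Theorem~4.10]{GHKK18}. So there is nothing in the paper to compare your argument against.

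That said, your sketch is a faithful outline of the proofs in those two references. The Caterpillar Lemma / coprimality-of-exchange-binomials strategy is exactly Fomin--Zelevinsky's original argument for Laurentness, and identifying cluster variables with theta functions and invoking positivity of the scattering diagram is precisely the GHKK route to nonnegativity. One small imprecision: your inductive step as written (``treat $t_1$ as initial and apply the inductive hypothesis'') does not quite work on its own, since $d(t_1,t)=d(t_0,t)-1$ but the Laurent ring you land in is $\mbZ[\mfx_{t_1}^{\pm 1}]$, not $\mbZ[\mfx^{\pm 1}]$, and the substitution back introduces the $P_k^{-1}$ you then have to remove; the Caterpillar Lemma is doing more than just supplying a ``second neighbor'' --- it provides the full inductive scaffolding that makes the two-localization argument go through simultaneously along the spine. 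You acknowledge this, but the way you phrase the induction before invoking the lemma slightly understates how essential that lemma is to even getting off the ground. Otherwise the plan is sound.
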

More precisely, the Laurent expression of $x_{i;t}$ in terms of the initial cluster $\mfx=(x_1,\dots,x_n)$ can be written as follows:\begin{align}
	x_{i;t}=\dfrac{N_{i;t}(x_1,\dots, x_n)}{x_1^{d_{1i;t}}\cdots x_n^{d_{ni;t}}},\  d_{ji;t}\in \mbZ, \label{express of d-vector}
\end{align} where $N_{i;t}(x_1,\dots, x_n)$ is a polynomial with nonnegative integer coefficients and not divisible by any $x_j$.
	\begin{definition}[\emph{Finite type}]
A cluster algebra $\mcA$ is called of \emph{finite type} if it contains only finite distinct seeds. Otherwise, it is called of \emph{infinite type}.
\end{definition}
It was shown in \cite[Theorem 1.8]{FZ03} that a cluster algebra is of finite type if and only if it contains an exchange matrix $B$ whose Cartan counterpart $A(B)$ is a Cartan matrix of finite type, that is of type $A_n\ (n\geq 1),B_n\ (n\geq 2),C_n\ (n\geq 2),D_n\ (n\geq 4),E_{6,7,8},F_4,G_2$. For our purpose, throughout this paper, we focus on the cluster algebras of type $A_n$, especially about the type $A_3$ in \Cref{dynkin A3}.
\\

\begin{figure}[htpb]

\tikzset{every picture/.style={line width=0.75pt}} 

\begin{tikzpicture}[x=0.75pt,y=0.75pt,yscale=-1,xscale=1,scale=0.75]

\draw    (100,123) -- (227,123) ;
\draw [shift={(227,123)}, rotate = 0] [color={rgb, 255:red, 0; green, 0; blue, 0 }  ][fill={rgb, 255:red, 0; green, 0; blue, 0 }  ][line width=0.75]      (0, 0) circle [x radius= 3.35, y radius= 3.35]   ;
\draw [shift={(100,123)}, rotate = 0] [color={rgb, 255:red, 0; green, 0; blue, 0 }  ][fill={rgb, 255:red, 0; green, 0; blue, 0 }  ][line width=0.75]      (0, 0) circle [x radius= 3.35, y radius= 3.35]   ;
\draw    (227,123) -- (354,123) ;
\draw [shift={(354,123)}, rotate = 0] [color={rgb, 255:red, 0; green, 0; blue, 0 }  ][fill={rgb, 255:red, 0; green, 0; blue, 0 }  ][line width=0.75]      (0, 0) circle [x radius= 3.35, y radius= 3.35]   ;

\end{tikzpicture}
\caption{Dynkin diagram of type $A_3$}
\label{dynkin A3}
\end{figure}
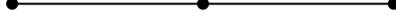
\begin{definition}[\emph{Cluster monomial}]\label{cluster monomials}
	A \emph{cluster monomial} is a product of nonnegative powers of cluster variables which all belong to a same cluster. More precisely, for any $t\in \mbT_n$, the set of cluster monomials is $\{x_{1;t}^{m_1}\dots x_{n;t}^{m_n}\mid m_1,\dots,m_n\in \mbZ_{\geq 0}\}$.
\end{definition}
Note that all the cluster variables belong to the set of all the cluster monomials.
\begin{remark}
	Cluster monomials play a fundamental role in  cluster theory. In \cite[Theorem 7.20]{GHKK18}, it was proved that for a cluster algebra, all distinct cluster monomials are linearly independent over $\mbZ$. In addition, the cluster monomials are contained in the \emph{theta functions}. When the cluster algebra is of finite type, theta functions are strictly cluster monomials, also see \cite{Nak23}. 
\end{remark}
\subsection{Permutation and compatibility} In this subsection, we introduce a permutation action on the seeds and its compatibility with the cluster mutations.
\begin{definition}[\emph{Permutation action}]\label{def: per}
	For a seed $\Sigma=(\mfx,B)$ in $\mcF$ and a permutation $\sigma \in \mathfrak{S}_n$, we define the action of $\sigma$ on $\Sigma$ by 
	\begin{align}
		\sigma(\Sigma)=(\sigma(\mfx),\sigma(B)),
	\end{align} where $\sigma(\mfx)=\mfx^{\prime}$ and $\sigma(B)=B^{\prime}$ are defined by 
	\begin{align}
		x^{\prime}_i=x_{\sigma^{-1}(i)},\ b^{\prime}_{ij}=b_{\sigma^{-1}(i)\sigma^{-1}(j)}.
	\end{align}
\end{definition} It is clear that $\sigma(\Sigma)$ is still a seed in $\mcF$, which yields a left action of $\mathfrak{S}_n$ on the set of seeds in $\mcF$. That is to say, we have $\sigma_2(\sigma_1(\Sigma))=(\sigma_2\sigma_1)(\Sigma)$ for any $\sigma_1,\sigma_2 \in \mathfrak{S}_n$. Sometimes, we also call $\Sigma$ and $\sigma(\Sigma)$ the same \emph{unlabeled seed}. Let the permutation matrix $P_\sigma$ associated with $\sigma\in \mathfrak{S}_n$ be as follows:
\begin{align}
	P_\sigma=(p_{ij}),\ p_{ij}=\delta_{i,\sigma^{-1}(j)}.
\end{align} Then, we can get the lemma as follows by direct calculation.
\begin{lemma}[{\cite{Nak23}}]\label{lem: permutation}
	For a seed $\Sigma=(\mfx,B)$ in $\mcF$ and $\sigma \in \mathfrak{S}_n$, the following equalities hold:
	\begin{enumerate}
		\item $\sigma(B)=P_{\sigma}^{\top}BP_{\sigma}$.
		\item $\mu_{\sigma(k)}(\sigma(\Sigma))=\sigma(\mu_k(\Sigma))$.
	\end{enumerate}
\end{lemma}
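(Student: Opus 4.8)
The plan is to prove \Cref{lem: permutation} by direct computation from the definitions in \Cref{def: per} and the matrix/cluster mutation rules \eqref{cluster variable mutation}--\eqref{matrix mutation}, treating the two assertions separately.

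For part (1), I would start by unwinding the entry $(P_\sigma^\top B P_\sigma)_{ij}$. Using $P_\sigma=(p_{ij})$ with $p_{ij}=\delta_{i,\sigma^{-1}(j)}$, we have $(P_\sigma^\top)_{ik}=p_{ki}=\delta_{k,\sigma^{-1}(i)}$, so
\begin{align}
(P_\sigma^\top B P_\sigma)_{ij}=\sum_{k,\ell}(P_\sigma^\top)_{ik}\,b_{k\ell}\,p_{\ell j}=\sum_{k,\ell}\delta_{k,\sigma^{-1}(i)}\,b_{k\ell}\,\delta_{\ell,\sigma^{-1}(j)}=b_{\sigma^{-1}(i)\sigma^{-1}(j)}.
\end{align}
By \Cref{def: per} this is exactly $(\sigma(B))_{ij}$, giving (1). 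As a sanity check, since $P_\sigma$ is orthogonal ($P_\sigma^\top=P_\sigma^{-1}$), this also makes transparent that $\sigma(B)$ is again skew-symmetrizable: if $D$ left skew-symmetrizes $B$, then $P_\sigma^\top D P_\sigma$ left skew-symmetrizes $\sigma(B)$, and conjugating a diagonal matrix by a permutation matrix keeps it positive diagonal. This confirms the parenthetical claim that $\sigma(\Sigma)$ is still a seed.

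For part (2), write $\Sigma=(\mfx,B)$, $\sigma(\Sigma)=(\mfy,C)$ with $y_i=x_{\sigma^{-1}(i)}$ and $C=\sigma(B)$, and compute both sides of $\mu_{\sigma(k)}(\sigma(\Sigma))=\sigma(\mu_k(\Sigma))$ componentwise. For the cluster part: on the left, applying $\mu_{\sigma(k)}$ to $(\mfy,C)$ changes only the $\sigma(k)$-th entry, and by \eqref{cluster variable mutation} that entry is $y_{\sigma(k)}^{-1}\big(\prod_j y_j^{[c_{j,\sigma(k)}]_+}+\prod_j y_j^{[-c_{j,\sigma(k)}]_+}\big)$; substituting $y_j=x_{\sigma^{-1}(j)}$, $y_{\sigma(k)}=x_k$, $c_{j,\sigma(k)}=b_{\sigma^{-1}(j)k}$ and reindexing the products by $j\mapsto\sigma(j)$ turns this into $x_k^{-1}\big(\prod_j x_j^{[b_{jk}]_+}+\prod_j x_j^{[-b_{jk}]_+}\big)=x_k'$, the $k$-th entry of $\mu_k(\mfx)$; and the $k$-th entry of $\mu_k(\mfx)$ sits in position $\sigma(k)$ after applying $\sigma$, while all other entries are unchanged on both sides, so the cluster components agree. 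For the matrix part, one can either repeat an analogous index chase with \eqref{matrix mutation}, or — more cleanly — use part (1): $\mu_{\sigma(k)}(\sigma(B))$ and $\sigma(\mu_k(B))=P_\sigma^\top\mu_k(B)P_\sigma$ are both obtained from $P_\sigma^\top B P_\sigma$, and since matrix mutation is equivariant under simultaneous permutation of rows and columns (which is exactly conjugation by $P_\sigma$, noting that the sign terms $[\,\cdot\,]_+$ and absolute values in \eqref{matrix mutation} are unaffected by permuting indices), the two coincide.

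I do not expect a serious obstacle here; the statement is genuinely a ``direct calculation'' as the excerpt says. The only mild care required is bookkeeping with $\sigma$ versus $\sigma^{-1}$ (the action on clusters uses $\sigma^{-1}$ on indices, while the mutation direction transforms as $k\mapsto\sigma(k)$), and making sure the reindexing of the products $\prod_j$ is justified — but that is immediate since $\sigma$ is a bijection of $\{1,\dots,n\}$. The cleanest writeup uses (1) to dispatch the matrix half of (2), so I would present (1) first and then lean on it.
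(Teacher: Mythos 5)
Your proposal is correct and matches the paper's intent: the paper gives no written proof, simply asserting that the lemma follows ``by direct calculation'' from \Cref{def: per} and the mutation rules (citing \cite{Nak23}), and your computation is exactly that calculation carried out, with the index bookkeeping for $\sigma$ versus $\sigma^{-1}$ handled correctly.
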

\begin{lemma}\label{lem: sign}
	For any two seeds $\Sigma_1=(\mfx,B)$ and $\Sigma_2=(\mfx,-B)$ in $\mcF$ with the same initial cluster $\mfx$, their mutation equivalence classes $\overline{\Sigma}_1$ and $\overline{\Sigma}_2$ are the same up to the sign of exchange matrices.
\end{lemma}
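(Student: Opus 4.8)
The plan is to prove \Cref{lem: sign} by induction on the length of a mutation sequence, showing that the two mutation classes are related by the sign-change involution $B \mapsto -B$ on exchange matrices, together with the identity on clusters. First I would observe from the mutation rules \eqref{cluster variable mutation} and \eqref{matrix mutation} the key invariances: in the cluster-variable mutation, swapping $B \to -B$ swaps the roles of $[b_{jk}]_+$ and $[-b_{jk}]_+$, so the two products in the parenthesis are merely interchanged and hence $x_k'$ is unchanged; in the matrix mutation, the case $i=k$ or $j=k$ sends $-b_{ij} \to b_{ij}$ (so the negated matrix stays negated there), while in the remaining case the formula $\tfrac12(b_{ik}|b_{kj}| + |b_{ik}|b_{kj})$ is odd under $B \to -B$ because replacing $b_{ik}, b_{kj}$ by $-b_{ik}, -b_{kj}$ multiplies the expression by $-1$ (the absolute values are unchanged, the signed factors each flip). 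Combining these two observations gives the single-step statement
\begin{align}\label{eq: sign step}
	\mu_k(\mfx, -B) = (\mfx'', -B'') \quad\text{whenever}\quad \mu_k(\mfx,B) = (\mfx'', B'').
\end{align}

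Next I would set up the induction. Fix a vertex $t_0 \in \mbT_n$ as the initial vertex for both cluster patterns $\mathbf\Sigma_1$ and $\mathbf\Sigma_2$ attached to $\Sigma_1 = (\mfx, B)$ and $\Sigma_2 = (\mfx, -B)$. I claim that for every $t \in \mbT_n$ we have $\mfx_{t}^{(1)} = \mfx_{t}^{(2)}$ and $B_{t}^{(2)} = -B_{t}^{(1)}$, where the superscripts indicate which pattern. This holds at $t_0$ by hypothesis, and the inductive step along any edge $t \stackrel{k}{\longleftrightarrow} t'$ is exactly \eqref{eq: sign step}: if $\mu_k(\mfx_t^{(1)}, B_t^{(1)}) = (\mfx_{t'}^{(1)}, B_{t'}^{(1)})$ and $B_t^{(2)} = -B_t^{(1)}$, $\mfx_t^{(2)} = \mfx_t^{(1)}$, then $\mu_k(\mfx_t^{(2)}, B_t^{(2)}) = \mu_k(\mfx_t^{(1)}, -B_t^{(1)}) = (\mfx_{t'}^{(1)}, -B_{t'}^{(1)})$, establishing the claim at $t'$. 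Since $\mbT_n$ is connected, the claim propagates to all of $\mbT_n$.

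From the claim, the set of seeds appearing in $\overline\Sigma_1$ and the set appearing in $\overline\Sigma_2$ are in bijection via $(\mfy, C) \mapsto (\mfy, -C)$; in particular the clusters coincide as sets, and each exchange matrix in one class is the negative of the corresponding one in the other. That is precisely the assertion that $\overline\Sigma_1$ and $\overline\Sigma_2$ agree up to the sign of the exchange matrices. I expect the only subtlety — and it is a minor one — to be bookkeeping: the claim must be phrased for the labeled cluster patterns (so that ``corresponding'' seeds are indexed by the same $t \in \mbT_n$), and one should note at the end that the statement descends to the unlabeled/mutation-equivalence-class level, which is immediate once \Cref{lem: permutation} is invoked to see that relabeling is harmless. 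No deep input is needed; the entire content is the parity computation underlying \eqref{eq: sign step}.
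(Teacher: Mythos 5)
Your proposal is correct and takes essentially the same route as the paper: both arguments rest on the observation that the matrix mutation rule is odd under $B\mapsto -B$ while the exchange relation for cluster variables is symmetric under swapping $[b_{jk}]_+$ and $[-b_{jk}]_+$, so the two patterns produce identical clusters and mutually negated exchange matrices. The only difference is that you spell out the induction along $\mbT_n$ explicitly, which the paper leaves implicit.
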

\begin{proof}
	According to the mutation rules \eqref{matrix mutation} of the exchange matrices, we may obtain the same mutated exchange matrices up to a sign under the same mutation sequence. Namely, for any $r\in \mbN$ and $1\leq i_j\leq n$ with $j=1,\dots,r$, we have  
	\begin{align}
		\mu_{i_r}\mu_{i_{r-1}}\dots\mu_{i_1}(B)=-\mu_{i_r}\mu_{i_{r-1}}\dots\mu_{i_1}(-B).
	\end{align} Moreover, since the mutation functions \eqref{cluster variable mutation} of cluster variables are binomial and symmetric with different signs of the exchange matrices, we get the same cluster variables. That is to say,  
	\begin{align}
		\mu_{i_r}\mu_{i_{r-1}}\dots\mu_{i_1}(\mfx(B))=\mu_{i_r}\mu_{i_{r-1}}\dots\mu_{i_1}(\mfx(-B)),
	\end{align} where $\mfx(B)$ and $\mfx(-B)$ denotes the same initial cluster $\mfx$ but with different initial exchange matrices. Then, we complete the proof.
\end{proof}
\section{A combinatorial framework}\label{sec: comb} In this section, based on \cite{CHS26}, we recall the definitions and properties of the log-concavity of Laurent polynomials and cluster monomials. Here, we also introduce a new notion of unimodal Laurent polynomials.
\subsection{Log-concave and unimodal Laurent polynomials} We begin with recalling the notion of log-concavity of Laurent polynomials defined in \cite{CHS26}. \begin{definition}[\emph{Log-concavity of Laurent polynomials}]\label{log-concave2}
	A nonzero Laurent polynomial with nonnegative real coefficients and $m$ variables \begin{align}f(x_1,\dots,x_m)=\sum\limits_{i_1=l_1}^{n_1}\dots\sum\limits_{i_m=l_m}^{n_m}a_{i_1,\dots,i_m}x_1^{i_1}\dots x_m^{i_m} \label{Laurent log} 
	\end{align} is said to be \emph{log-concave} if for any $1\leq j\leq m$ and $l_j\leq i_j\leq n_j$, \begin{align}a^2_{i_1,\dots,i_j,\dots,i_m}\geq a_{i_1,\dots,i_{j}-1,\dots,i_m}a_{i_1,\dots,i_{j}+1,\dots,i_m},\label{Laurent inequality}\end{align} where setting $a_{i_1,\dots,l_j-1,\dots,i_m}=a_{i_1,\dots,n_{j}+1,\dots,i_m}=0$. 
\end{definition}
We say that $f(x_1,\dots,x_n)$ has \emph{no internal zero coefficients} if for any $1\leq j\leq m$, there does not exist integers  $l_j\leq p_j<q_j<r_j\leq n_j$, satisfying $a_{i_1,\dots,p_j,\dots,i_m}\neq 0$, $a_{i_1,\dots,q_j,\dots,i_m}= 0$ and $a_{i_1,\dots,r_j,\dots,i_m}\neq 0$. Then, we introduce a novel notion of unimodality for  Laurent polynomials. 

\begin{definition}[\emph{Unimodality of Laurent polynomials}] \label{def: unimodal}
	Let the nonzero Laurent polynomial $f(x_1,\dots,x_n)$ with positive real coefficients be the same as \eqref{Laurent log}. It is said to be \emph{unimodal} if for any $1\leq j\leq m$, there exists $l_j\leq k_j\leq n_j$, such that \begin{align}\label{eq: unimodal} a_{i_1,\dots,l_{j},\dots,i_m}\leq \cdots \leq a_{i_1,\dots,k_{j},\dots,i_m}\geq  \cdots \geq  a_{i_1,\dots,n_{j},\dots,i_m} \end{align} holds for any $i_1,\dots, i_{j-1}, i_{j+1}, \dots i_m$.  
\end{definition}
For convenience, we sometimes say that the sequence of the coefficient sequence of $f(x_1,\dots,x_n)$ is log-concave or unimodal if $f(x_1,\dots,x_n)$ is log-concave or unimodal.
\begin{lemma}\label{lem: log is uni}
	A nonnegative log-concave Laurent polynomial $f(x_1,\dots,x_m)$ with no internal zeros is unimodal. 
\end{lemma}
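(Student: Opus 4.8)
The plan is to reduce the multivariate statement to the classical one-variable fact that a nonnegative log-concave sequence with no internal zeros is unimodal, applied coordinate by coordinate. Fix $1 \leq j \leq m$ and fix all the remaining indices $i_1,\dots,i_{j-1},i_{j+1},\dots,i_m$; set $c_{i_j} := a_{i_1,\dots,i_j,\dots,i_m}$ for $l_j \leq i_j \leq n_j$. By hypothesis each $c_{i_j} \geq 0$, the inequality \eqref{Laurent inequality} says $c_{i_j}^2 \geq c_{i_j-1} c_{i_j+1}$ (with the boundary convention $c_{l_j-1} = c_{n_j+1} = 0$), and the no-internal-zeros assumption says the support of $(c_{i_j})$ is an interval. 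So it suffices to prove: a finite nonnegative sequence $c_{l}, c_{l+1}, \dots, c_{n}$ with $c_k^2 \geq c_{k-1}c_{k+1}$ for all $k$ and whose nonzero entries occupy a contiguous block is unimodal, i.e. there is an index $k$ with $c_l \leq \cdots \leq c_k \geq \cdots \geq c_n$.

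Second, I would prove this one-dimensional lemma directly. Restrict attention to the support block $c_p, c_{p+1}, \dots, c_q$ (all strictly positive); outside it the sequence is $0 \leq \cdots$ on the left and $\cdots \geq 0$ on the right, so unimodality of the whole sequence follows once we have it on the block. On the block, divide the log-concavity inequality $c_k^2 \geq c_{k-1}c_{k+1}$ by $c_{k-1}c_k > 0$ to get $r_k := c_k/c_{k-1} \geq c_{k+1}/c_k =: r_{k+1}$, so the ratio sequence $r_{p+1} \geq r_{p+2} \geq \cdots \geq r_q$ is non-increasing. Therefore the set of indices where $r_k \geq 1$ is an initial segment: once a ratio drops below $1$ it stays below $1$. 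Let $k$ be the largest index with $r_k \geq 1$ (or $k = p$ if no ratio is $\geq 1$). Then for indices up to $k$ we have $c_{k'-1} \leq c_{k'}$, giving the increasing part $c_p \leq \cdots \leq c_k$, and for indices beyond $k$ we have $r_{k'} < 1$, i.e. $c_{k'} < c_{k'-1}$, giving the decreasing part $c_k > c_{k+1} > \cdots > c_q$. Splicing the two halves and appending the zeros on both sides produces the chain \eqref{eq: unimodal}.

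Since the argument works for each coordinate direction $j$ independently and the peak index $k_j$ is allowed to depend on the other indices in \Cref{def: unimodal}, assembling these gives the claim. The only subtlety worth flagging is a definitional one: \Cref{def: unimodal} as stated speaks of Laurent polynomials with \emph{positive} real coefficients, whereas a general log-concave $f$ may have zero coefficients — this is exactly why the "no internal zeros" hypothesis is imposed, and one should read the conclusion as asserting the chain \eqref{eq: unimodal} for the coefficient sequence (allowing zeros at the ends), which is what the above produces. There is no real obstacle here; the content is entirely the elementary ratio-monotonicity argument, and the main care is bookkeeping the boundary zeros so that the "flip" from the increasing run to the decreasing run is justified by genuine log-concavity inequalities rather than by the artificial $c_{l_j-1} = 0$ convention.
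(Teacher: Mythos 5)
Your proof is correct and follows essentially the same route as the paper's: reduce to the one-variable case, form the ratio sequence on the positive support block, use log-concavity to show it is non-increasing, and locate the peak where the ratios cross $1$ (your "largest index with $r_k\geq 1$" handles uniformly the three cases the paper treats separately). The definitional point you flag about zero coefficients versus the "positive coefficients" wording in \Cref{def: unimodal} is a fair observation, but it does not affect the substance of the argument.
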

\begin{proof}
	 For any $j\in \{1,\dots,m\}$, we fix the indices $I_j=\{i_1,\dots,i_{j-1},i_{j+1},\dots,i_m\}$ except $i_j$. Note that $f$ has no internal zeros. If there are fewer than two consecutive terms associated with $I_j$ having positive coefficients, then \eqref{eq: unimodal} holds trivially. Hence, we may assume that there at least three consecutive terms associated with $I_j$ with positive coefficients. Denote by 
	\begin{align}
		r_{t;I_j}=\dfrac{a_{i_1,\dots,t+1,\dots,i_m}}{a_{i_1,\dots,t,\dots,i_m}},\ (a_{i_1,\dots,t,\dots,i_m}>0).
	\end{align} Since $f(x_1,\dots,x_n)$ is log-concave, we have 
	\begin{align}
		\dfrac{a_{i_1,\dots,t+1,\dots,i_m}}{a_{i_1,\dots,t,\dots,i_m}}\leq \dfrac{a_{i_1,\dots,t,\dots,i_m}}{a_{i_1,\dots,t-1,\dots,i_m}}.
	\end{align} Hence, it implies that $r_{t;I_j} \leq r_{t-1;I_j}$, that is $\{r_{t;I_j}\}_{t=p_j}^{q_j}$ is a monotonically decreasing sequence, where $l_j\leq p_j< q_j\leq n_j$. If the first term $r_{p_j;I_j}$ of this sequence is no larger than $1$, then the sequence $\{r_{t;I_j}\}$ admits $1$ as an upper bound. Thus, we have 
	\begin{align}
		a_{i_1,\dots,p_j,\dots,i_m}\geq \cdots \geq  a_{i_1,\dots,t-1,\dots,i_m} \geq a_{i_1,\dots,t,\dots,i_m} \geq  a_{i_1,\dots,t+1,\dots,i_m}\geq \cdots \geq  a_{i_1,\dots,q_j,\dots,i_m}.
	\end{align} Note that the remaining coefficients are zero, we may directly obtain \eqref{eq: unimodal} by taking $k_j=p_j$. If the last term $r_{q_j;I_j}$ of this sequence is no less than $1$, by a similar argument, we have \eqref{eq: unimodal} by taking $k_j=q_j$. If $r_{s;I_j}\geq 1$ and $r_{s+1;I_j}< 1$ for some $p_j< s< q_j$, we have 
	\begin{align}
		a_{i_1,\dots,p_j,\dots,i_m}\leq \cdots \leq  a_{i_1,\dots,s,\dots,i_m} \leq a_{i_1,\dots,s+1,\dots,i_m} \geq  a_{i_1,\dots,s+2,\dots,i_m}\geq \cdots \geq  a_{i_1,\dots,q_j,\dots,i_m}.
	\end{align} Then, we obtain \eqref{eq: unimodal} by taking $k_j=s+1$, which completes the proof.
\end{proof} 
\begin{example}
	The Laurent polynomial with three variables \begin{align}f(x_1,x_2,x_3)=\dfrac{1+2x_2+x_2^2+x_1x_3}{x_1x_2x_3}\end{align} is log-concave by direct calculation. Moreover, it is unimodal since it only has nonnegative coefficients with no internal zeros. In fact, it is a cluster variable of type $A_3$, see \Cref{table inward} in the next \Cref{sec: cluster monomials}.
\end{example}
Thanks to the Laurent phenomenon and the positivity  of cluster variables (\Cref{theorem:LE}), cluster monomials share these properties,  which ensures that the following definition is well defined.
\begin{definition}[\emph{Log-concavity and unimodality of cluster monomials}]\label{def: cm}
	For a coefficient-free cluster algebra $\mcA$, a cluster monomial is called \emph{log-concave} (\emph{unimodal}) if it is log-concave (unimodal) as a Laurent polynomial with respect to the initial cluster $\mfx$.
\end{definition}
\begin{remark}\label{same up}
	Note that any two Laurent polynomials which are same up to a Laurent monomial factor keep the same property of log-concavity and unimodality. In particular,  by  formula \eqref{express of d-vector}, any cluster variable $x_{i;t}$ is log-concave (unimodal) if and only if $N_{i;t}(x_1,\dots,x_n)$ is log-concave (unimodal).
\end{remark}
In the previous work \cite{CHS26}, we have proved the following theorem and given a log-concavity conjecture for the cluster monomials of type $A_n$ ($n\geq 3$). 
\begin{theorem}[{\cite[Theorem 4.2 \& Theorem 5.3]{CHS26}}]\label{thm: log-concave of cluster variable}
	The cluster variables of type $A_n$ and the cluster monomials of type $A_2$ are log-concave.
\end{theorem}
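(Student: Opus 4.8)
The statement splits into two independent claims, and both reduce to showing that a suitable numerator polynomial is log-concave. Indeed, by the Laurent phenomenon (\Cref{theorem:LE}) and \Cref{same up}, a cluster variable $x_{i;t}$ is log-concave if and only if its numerator $N_{i;t}(x_1,\dots,x_n)$ is, and likewise a cluster monomial is log-concave if and only if the numerator of its reduced Laurent expansion is. For type $A$ these numerators admit explicit positive expansions whose coefficients are products of binomial coefficients, and I will exploit two elementary facts throughout: first, for a fixed positive integer $j$ the row $k\mapsto\binom{m}{k}$ and the sequence $u\mapsto\binom{u}{j}$ are both log-concave (the latter because $\binom{u+1}{j}/\binom{u}{j}=(u+1)/(u+1-j)$ is decreasing in $u$); second, the pointwise product of two positive log-concave sequences is log-concave, since $a_i^2\ge a_{i-1}a_{i+1}$ and $b_i^2\ge b_{i-1}b_{i+1}$ give $(a_ib_i)^2\ge(a_{i-1}b_{i-1})(a_{i+1}b_{i+1})$.

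For the cluster variables of type $A_n$, the plan is to realize the cluster algebra from a triangulation $T$ of a convex $(n+3)$-gon and to expand each cluster variable $x_\gamma$, attached to a diagonal $\gamma$, by the $T$-path formula of \cite{Sch08, ST09}. Since in a polygon any two diagonals cross at most once, the denominator of $x_\gamma$ is the squarefree product of the arcs of $T$ crossed by $\gamma$, so $N_\gamma$ is precisely the sum over complete $T$-paths of their even-step monomials. The central combinatorial step I would carry out is to analyze the turning choices available to a $T$-path at each triangle it traverses and thereby prove that the number of $T$-paths yielding a prescribed numerator monomial factors as a product of binomial coefficients. Granting this, fix all exponents except that of a single initial variable $x_j$; the resulting coefficient slice is then a product of binomial coefficients, each log-concave as a function of the exponent of $x_j$, so by the second fact the slice is log-concave and \eqref{Laurent inequality} holds in the $x_j$-direction. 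Running this over all $n$ directions gives log-concavity of $N_\gamma$, hence of $x_\gamma$.

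For the cluster monomials of type $A_2$, I would work with the initial seed $\mfx=(x_1,x_2)$ and list the five cluster variables $x_1,x_2$, $x_3=(1+x_2)/x_1$, $x_4=(1+x_1+x_2)/(x_1x_2)$, $x_5=(1+x_1)/x_2$, whose clusters are the five adjacent pairs around the pentagon. By the symmetry exchanging $x_1$ and $x_2$, which fixes the initial cluster and preserves log-concavity, it suffices to treat the representatives $\{x_1,x_2\}$, $\{x_2,x_3\}$, $\{x_3,x_4\}$; thus, apart from the trivial monomial case, the numerators to handle have the shapes $x_2^a(1+x_2)^b$ and $(1+x_2)^a(1+x_1+x_2)^b$. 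The first has coefficients $\binom{b}{k}$ and is log-concave at once. For the second, the key identity is $(1+x_2)^a(1+x_1+x_2)^b=\sum_{i}\binom{b}{i}(1+x_2)^{a+b-i}x_1^{i}$, which shows that the coefficient of $x_1^ix_2^j$ equals $\binom{b}{i}\binom{a+b-i}{j}$. As a function of $i$ (with $j$ fixed) this is the product of the log-concave sequences $\binom{b}{i}$ and $\binom{a+b-i}{j}$, and as a function of $j$ (with $i$ fixed) it is a constant times the log-concave row $\binom{a+b-i}{j}$; in both directions the second fact gives \eqref{Laurent inequality}.

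The main obstacle is the type $A_n$ step: proving that the $T$-path count factors as a product of binomial coefficients along each coordinate direction demands careful bijective bookkeeping of the turning options of a $T$-path, and one must confirm that normalizing by the crossed arcs does not disturb the relevant slices---this last point is guaranteed by \Cref{same up}, since dividing by a Laurent monomial only translates the support. By contrast the $A_2$ step is soft: its only real content is producing the explicit factorizations that turn each numerator into a product of binomial arrays, after which preservation of log-concavity under multiplication closes the argument, while the $x_1\leftrightarrow x_2$ symmetry keeps the case analysis finite.
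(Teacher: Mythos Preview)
This theorem is not proved in the present paper; it is merely cited from \cite{CHS24}, so there is no proof here to compare against. The paper's introduction does indicate that the original argument in \cite{CHS24} proceeds ``by use of triangulations and $T$-paths,'' which is precisely the framework you adopt for the $A_n$ part, so your strategy is at least consistent with what is attributed to the source.

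Your treatment of the $A_2$ cluster monomials is correct and essentially complete: the list of cluster variables, the reduction by the $x_1\leftrightarrow x_2$ symmetry to three cluster types, the expansion $(1+x_2)^a(1+x_1+x_2)^b=\sum_i\binom{b}{i}(1+x_2)^{a+b-i}x_1^i$, and the resulting coefficient $\binom{b}{i}\binom{a+b-i}{j}$ of $x_1^ix_2^j$ are all right. The log-concavity in each direction then follows from the log-concavity of each factor; the two elementary facts you invoke are exactly \Cref{lem: binomial1} and \Cref{lem: binomial2} of the present paper.

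For the $A_n$ cluster variables, however, you have only an outline. You correctly identify the strategy---expand $x_\gamma$ via $T$-paths and argue that the coefficient slices factor as products of binomials---but you yourself flag the core combinatorial step (that the $T$-path count with a prescribed numerator monomial factors as a product of binomial coefficients along each coordinate direction) as ``the main obstacle'' you have not carried out. This is not a minor bookkeeping detail: it is the entire substance of the $A_n$ claim, and without it what you have is a plan rather than a proof. The plan is the right one and aligns with the method the paper attributes to \cite{CHS24}, but as written the $A_n$ half of the proposal remains unproved.
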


Moreover, based on the unimodality defined as above, we obtain the following proposition.
\begin{proposition}\label{prop: A3 unimodal}
	The cluster variables of type $A_n$ and the cluster monomials of type $A_2$ are unimodal.
\end{proposition}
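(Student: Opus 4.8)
The plan is to upgrade \Cref{thm: log-concave of cluster variable} to a unimodality statement by combining it with \Cref{lem: log is uni}. The key observation is that, by \Cref{same up}, the unimodality of a cluster variable $x_{i;t}$ of type $A_n$ (or a cluster monomial of type $A_2$) is equivalent to the unimodality of its numerator polynomial $N_{i;t}(x_1,\dots,x_n)$, which has nonnegative integer coefficients. By \Cref{lem: log is uni}, it therefore suffices to check two things: that $N_{i;t}$ is log-concave — which is exactly \Cref{thm: log-concave of cluster variable} — and that $N_{i;t}$ has \emph{no internal zero coefficients} in each variable separately.

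First I would recall from \cite{CHS24} the explicit combinatorial description of the numerators $N_{i;t}$ for type $A_n$ cluster variables and for type $A_2$ cluster monomials, obtained via triangulations and $T$-paths. In the $A_2$ case one can simply inspect the finitely many cluster monomials $x_{1;t}^{m_1}x_{2;t}^{m_2}$ directly: each cluster variable has an explicit small Laurent expansion, and one verifies by hand that the product of two such (in each of the finitely many clusters) has no internal zeros in either variable. For $A_n$ cluster variables, I would argue from the $T$-path expansion that the exponents of each initial variable $x_j$ occurring in $N_{i;t}$ form a contiguous interval of integers — equivalently, the Newton polytope of $N_{i;t}$, intersected with any line parallel to a coordinate axis, meets the support in a set of consecutive lattice points. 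This is a structural fact about $F$-polynomials/cluster variables of type $A_n$ that follows from the saturation/normality of the relevant Newton polytopes (the $F$-polynomial of a cluster variable of type $A_n$ has a saturated, indeed interval-in-each-coordinate, support).

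Once no-internal-zeros is established in each variable, \Cref{lem: log is uni} applies verbatim: fixing all but one index $i_j$, the remaining one-dimensional slice of the coefficient array is a nonnegative log-concave sequence with no internal zeros, hence unimodal, and this holds uniformly over the other indices, which is precisely \Cref{def: unimodal}. Finally I would note that for cluster variables this already records \Cref{prop: A3 unimodal} restricted to variables of type $A_n$, and the $A_2$ cluster-monomial case completes the statement; the product structure of cluster monomials of type $A_2$ is handled either by the direct finite check above or, more conceptually, by the convolution method (\Cref{prop: LU}) once it is set up, since a product of Laurent polynomials in disjoint-but-here-overlapping variables still preserves the no-internal-zero property along each axis for the $A_2$ expansions in question.

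The main obstacle I expect is the no-internal-zeros claim for type $A_n$ cluster variables in full generality: log-concavity alone does not forbid a zero coefficient flanked by nonzero ones, so one genuinely needs the combinatorial input that the support of $N_{i;t}$ is an interval along each coordinate direction. Establishing this cleanly — rather than by ad hoc inspection — requires a careful reading of the $T$-path model of \cite{FST08, Sch08, ST09, CHS24}: one must show that if two $T$-paths contribute monomials with $x_j$-exponents $p$ and $r>p+1$, then some $T$-path contributes a monomial with $x_j$-exponent $q$ for every $p<q<r$. I would handle this by interpolating between the two $T$-paths via local moves (flips), each of which changes the relevant exponent by exactly one, which is the technical heart of the argument.
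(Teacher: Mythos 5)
Your proposal takes essentially the same route as the paper: combine \Cref{thm: log-concave of cluster variable} with \Cref{lem: log is uni}, the only additional input being that the numerators have nonnegative coefficients with no internal zeros, which the paper simply reads off from the explicit calculations in the proofs of \cite[Theorems 4.2 and 5.3]{CHS24}. You correctly identify the no-internal-zeros verification as the one genuine point to check; the paper treats it as an observation from the existing $T$-path computations rather than developing the interpolation-by-flips argument you sketch, but the logical structure is identical.
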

\begin{proof}
	In the proof of \cite[Theorem 4.2 \& Theorem 5.3]{CHS26}, by the direct calculation, we obtain that their coefficients are nonnegative and have no internal zeros. Hence, by \Cref{lem: log is uni} and \Cref{thm: log-concave of cluster variable}, we conclude that the cluster variables of type $A_n$ and the cluster monomials of type $A_2$ are unimodal.
\end{proof}
\begin{remark}\label{not log}
	Note that the product of two log-concave Laurent polynomials is not necessarily log-concave. For example, let us consider  \begin{align}f(x_1,x_2)&=\dfrac{2 + 3 x_1 + x_1^2 + 3 x_2 + 4 x_1 x_2 + 3 x_1^2 x_2 + 4 x_1 x_2^2}{x_1x_2},\\ 
	g(x_1,x_2)&=\dfrac{5 + 5 x_1 + x_1^2 + 4 x_2 + 4 x_1 x_2 + x_1^2 x_2 + x_1^2 x_2^2}{x_1x_2}.\end{align} Both of them are log-concave. However, by direct calculation, in the numerator of $f(x_1,x_2)g(x_1,x_2)$, there are three consecutive terms $23x^ 3_1x_2^2,8x^3_1x_2^3$ and $4x^3_1x_2^4$ such that $8^2<23\times 4$. It implies that $f(x_1,x_2)g(x_1,x_2)$ is not log-concave. Hence, it seems not enough to directly get the log-concavity of cluster monomials of type $A_n$ by \Cref{thm: log-concave of cluster variable}. 
\end{remark} 
\begin{conjecture}[{\cite[Conjecture 5.5]{CHS26}}]
	\label{An conj}
	The cluster monomials of type $A_n\ (n\geq 3)$ are log-concave.
\end{conjecture}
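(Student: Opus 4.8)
The plan is to prove \Cref{An conj} by induction on the rank $n$, using the triangulation model of type $A_n$ cluster algebras together with an enhancement of the convolution machinery of \Cref{prop: LU}. First I would fix the combinatorial model: realize $\mcA$ of type $A_n$ as the cluster algebra of a triangulated $(n+3)$-gon, so that cluster variables correspond to diagonals and a cluster monomial $x_{1;t}^{m_1}\cdots x_{n;t}^{m_n}$ corresponds to a multiset of pairwise compatible diagonals. By \Cref{same up}, its log-concavity and unimodality depend only on the numerator, which up to a Laurent monomial equals the product $\prod_k N_{k;t}^{m_k}$ of the individual cluster-variable numerators, each expressible through $T$-paths. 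As in \Cref{prop: classification}, I would use the strongly isomorphism of cluster algebras to reduce an arbitrary initial seed to a short list of canonical seeds, the higher-rank analogue of \Cref{three cases}; since the invariance of log-concavity under strongly isomorphism is exactly part (2) of \Cref{general conj}, establishing it in the required generality is itself a preliminary sub-goal of this reduction.

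The analytic core is to control the coefficient array of such a product. Fixing a coordinate direction $x_j$ together with all remaining exponents, the coefficient sequence of $\prod_k N_{k;t}^{m_k}$ in $x_j$ is a multivariate convolution of the coefficient arrays of the factors. The Remark preceding \Cref{An conj} shows that bare log-concavity is \emph{not} preserved under such a convolution, so I would attach to each cluster-variable numerator a stronger, convolution-stable invariant: a real-rootedness (Pólya-frequency / total-positivity) property of the auxiliary polynomials that encode the coefficient ratios, which forces log-concavity via the Newton inequalities (\Cref{thm: newton}) and is genuinely stable under products. Here the Gauss hypergeometric functions and Jacobi polynomials of \Cref{lem: real roots} enter precisely to localize the zeros of these auxiliary polynomials; the task is to extend this zero-localization from the three rank-$3$ families to the families produced by the $T$-path expansion in rank $n$, so that \Cref{prop: LU} upgrades to the assertion that the enhanced invariant, and hence log-concavity, survives the convolution defining the monomial.

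For the inductive step I would exploit that deleting a vertex of the triangulated $(n+3)$-gon, equivalently freezing one cluster variable, relates a cluster of type $A_n$ to one of type $A_{n-1}$. A general rank-$n$ cluster-monomial numerator then factors compatibly with a rank-$(n-1)$ one, which lets me transport the convolution-stable invariant from the inductive hypothesis and reduce the remaining work to the contribution of the newly introduced diagonal. Once log-concavity is secured, unimodality follows from \Cref{lem: log is uni}, provided the numerators have no internal zero coefficients; in type $A_n$ this can be read off the triangulation uniformly in $n$, because the supports of $T$-path expansions are connected.

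The main obstacle is the middle step: pinning down the correct convolution-stable strengthening of log-concavity and proving it for \emph{every} cluster-variable numerator of type $A_n$. The proof of \Cref{thm: main result} succeeds in rank $3$ because the three canonical families of \Cref{three cases} reduce to Jacobi polynomials whose real-rootedness is classical; but for $n\ge 4$ the one-variable specializations of the numerators are in general no longer real-rooted, so the zero-localization of \Cref{lem: real roots} does not transfer verbatim and must be replaced by a statement about auxiliary ratio polynomials. Overcoming this will most plausibly require a uniform combinatorial model, such as snake graphs or band graphs and their perfect-matching generating functions, for which a convolution-stable positivity can be established directly for all $n$ at once. It is exactly this uniform input that is still missing beyond rank $3$, and that is why \Cref{An conj} remains open.
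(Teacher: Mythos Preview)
The statement you were given is \emph{not} a theorem of the paper but an open conjecture (\Cref{An conj}), reproduced verbatim from \cite{CHS24}. The paper does not prove it; its main contribution is the special case $n=3$ (\Cref{thm: main result}), and it explicitly reformulates the general case as \Cref{general conj}, still conjectural. There is therefore no ``paper's own proof'' to compare your proposal against.

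Your text is not a proof either, and you acknowledge this in your final paragraph. What you have written is a research strategy: reduce via strongly isomorphism to canonical seeds, replace log-concavity by a convolution-stable real-rootedness invariant, and carry this through an inductive triangulation argument. Two concrete gaps make this a plan rather than a proof. First, the reduction step presupposes part~(2) of \Cref{general conj}, which is itself open for $n\ge 4$; you cannot both assume it and list it as a ``preliminary sub-goal'' without supplying the argument. Second, and more seriously, the convolution-stable strengthening you invoke is never specified: you say it should be a Pólya-frequency/total-positivity property verifiable via \Cref{thm: newton} and \Cref{lem: real roots}, but you then concede that for $n\ge 4$ the relevant one-variable specializations are not real-rooted, so the only candidate invariant you name already fails. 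Absent a concrete invariant that (i) holds for every type-$A_n$ cluster-variable numerator and (ii) is provably closed under products, the inductive machinery has nothing to transport.

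In short: the paper leaves \Cref{An conj} open for $n\ge 4$, and so does your proposal. If you intended to address only $n=3$, that is \Cref{thm: main result}, and the paper's proof there is a direct case analysis over the three reduced seeds of \Cref{three cases}, not an induction on rank.
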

\subsection{Binomial coefficients and the convolution} In this subsection, we introduce some properties of the binomial coefficients and the convolution of two log-concave sequences, which can also be referred to \cite{Sta89}.
\begin{lemma}\label{lem: binomial1}
	For $1\leq k\leq n-1$, the inequality related with binomial coefficients holds: 
	\begin{align}
		\binom{n}{k}^2 \geq \binom{n}{k-1}\binom{n}{k+1}.
	\end{align}
\end{lemma}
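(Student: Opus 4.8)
The inequality $\binom{n}{k}^2 \geq \binom{n}{k-1}\binom{n}{k+1}$ is the classical statement that the sequence of binomial coefficients is log-concave, and the plan is simply to verify it by a direct manipulation of factorials. First I would rewrite the claimed inequality as a comparison of the consecutive ratios: dividing through by $\binom{n}{k-1}\binom{n}{k}$ (both positive for $1 \leq k \leq n-1$), the inequality is equivalent to
\begin{align}
	\frac{\binom{n}{k}}{\binom{n}{k-1}} \geq \frac{\binom{n}{k+1}}{\binom{n}{k}}.
\end{align}
Using the standard identity $\binom{n}{k}/\binom{n}{k-1} = (n-k+1)/k$, valid for $1 \leq k \leq n$, the left-hand side becomes $(n-k+1)/k$ and the right-hand side (replacing $k$ by $k+1$) becomes $(n-k)/(k+1)$.

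Thus the whole statement reduces to the elementary inequality
\begin{align}
	\frac{n-k+1}{k} \geq \frac{n-k}{k+1},
\end{align}
which, after clearing the positive denominators $k$ and $k+1$, is $(n-k+1)(k+1) \geq k(n-k)$, i.e. $(n-k)(k+1) + (k+1) \geq k(n-k)$, i.e. $(n-k) + (k+1) \geq 0$, i.e. $n+1 \geq 0$. This holds for all $n \in \mbN$, and in fact the inequality is strict, so the log-concavity of binomial coefficients is even strict in the relevant range. Tracing back through the equivalences (all of which are reversible because every quantity divided out is strictly positive when $1 \leq k \leq n-1$) gives the claim.

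There is essentially no obstacle here: the only point requiring a line of care is checking that the denominators one divides by are nonzero, which is exactly why the hypothesis $1 \leq k \leq n-1$ (rather than $0 \leq k \leq n$) is imposed — it guarantees $\binom{n}{k-1}$, $\binom{n}{k}$, and $\binom{n}{k+1}$ are all positive so that the reduction to ratios is legitimate. Alternatively, one could give a one-line factorial computation: $\binom{n}{k}^2 - \binom{n}{k-1}\binom{n}{k+1} = \binom{n}{k}^2\bigl(1 - \tfrac{k}{n-k+1}\cdot\tfrac{n-k}{k+1}\bigr)$ and observe the bracketed factor equals $\tfrac{(n-k+1)(k+1) - k(n-k)}{(n-k+1)(k+1)} = \tfrac{n+1}{(n-k+1)(k+1)} > 0$. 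I would present whichever of these two phrasings is shorter in context; both are routine.
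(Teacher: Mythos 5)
Your proposal is correct and follows essentially the same route as the paper: both reduce the claim to the elementary inequality $(n-k+1)(k+1)\geq k(n-k)$, the paper by directly computing the ratio $\binom{n}{k}^2/\bigl(\binom{n}{k-1}\binom{n}{k+1}\bigr)=\frac{(n-k+1)(k+1)}{(n-k)k}\geq 1$, which is exactly your alternative one-line phrasing. No gaps; your remark about positivity of the denominators for $1\leq k\leq n-1$ is the only point of care and you handle it.
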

\begin{proof} Note that the following inequality 
	\begin{align}
			\frac{\binom{n}{k}^2}{\binom{n}{k-1}\binom{n}{k+1}}=\frac{(n-k+1)(k+1)}{(n-k)k}\geq 1.
		\end{align}
\end{proof}
\begin{lemma}\label{lem: binomial2}
	For $0\leq k\leq n-1$, the inequality related with binomial coefficients holds: \begin{align}\binom{n}{k}^2\geq \binom{n-1}{k}\binom{n+1}{k}.\end{align}
\end{lemma}
\begin{proof}
	We only need to note that \begin{align}
		\frac{\binom{n}{k}^2}{\binom{n-1}{k}\binom{n+1}{k}}=\frac{(n-k+1)n}{(n-k)(n+1)}=\frac{n^2-kn+n}{n^2-kn+n-k}\geq 1. 
	\end{align} 
\end{proof}
\begin{definition}[\emph{Convolution sequence}]\label{def: convo}
	For any number sequence $\textbf{a}=\{a_i\}_{i=0}^{n}$ and $\textbf{b}=\{b_i\}_{i=0}^{m}$, we define the \emph{convolution sequence} $\textbf{a}*\textbf{b}=\{(\textbf{a}*\textbf{b})_t\}_{t=0}^{m+n}$ as follows:
	\begin{align}
		(\textbf{a}*\textbf{b})_t  \coloneqq \sum_{\substack{0 \le i \le n,\, 0 \le j \le m, \\ i+j=t} } a_ib_j.
	\end{align}
\end{definition}
\begin{lemma}\label{lem: extended}
	Let $\textbf{a}=\{a_i\}_{i=0}^{n}$ be a nonnegative log-concave sequence with no internal zeros. Then, for any $0\leq i\leq j\leq n$ and $r\geq 0$, we have $a_ia_j\geq a_{i-r}a_{j+r}$.
\end{lemma}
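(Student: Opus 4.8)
\textbf{Proof proposal for Lemma \ref{lem: extended}.}

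The plan is to reduce the general inequality $a_ia_j\geq a_{i-r}a_{j+r}$ (for $0\le i\le j\le n$, $r\ge 0$, with the convention that out-of-range entries are $0$) to the single-step case $r=1$, which is exactly the statement that the sequence of ratios $a_{k+1}/a_k$ is weakly decreasing along the support. First I would handle the degenerate situations: if $i-r<0$ or $j+r>n$, then the right-hand side is $0$ and there is nothing to prove, so I may assume $0\le i-r$ and $j+r\le n$. I would also dispose of the case where some entry among $a_{i-r},\dots,a_{j+r}$ vanishes: since $\mathbf{a}$ has no internal zeros, a zero entry strictly between two nonzero entries is impossible, so if any of $a_{i-r},\dots,a_{j+r}$ is zero then in fact $a_0=\dots=0$ up to some point or $\dots=0$ from some point on, forcing one of $a_{i-r},a_{j+r}$ to be zero (or one of $a_i,a_j$, in which case the left side could be zero too — but then the relevant endpoint entry on the right is also zero). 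The upshot is that it suffices to treat the case where $a_{i-r},a_{i-r+1},\dots,a_{j+r}$ are all strictly positive.

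With positivity in hand, the core argument is a telescoping comparison of ratios. Log-concavity gives, for every $k$ with $a_{k-1},a_k,a_{k+1}>0$, the inequality $a_k^2\ge a_{k-1}a_{k+1}$, i.e. $a_{k+1}/a_k\le a_k/a_{k-1}$; hence the ratio sequence $\rho_k\coloneqq a_{k+1}/a_k$ is nonincreasing on the positive support. Now write
\begin{align}
	\frac{a_ia_j}{a_{i-r}a_{j+r}}=\frac{a_i}{a_{i-r}}\cdot\frac{a_j}{a_{j+r}}=\Big(\prod_{k=i-r}^{i-1}\rho_k\Big)\Big/\Big(\prod_{k=j}^{j+r-1}\rho_k\Big).
\end{align}
Since $i-r\le i-1< j\le j+r-1$ (using $i\le j$), every index $k$ appearing in the numerator product is strictly smaller than every index appearing in the denominator product, and there are exactly $r$ factors in each. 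Because $\rho$ is nonincreasing, each factor $\rho_k$ in the numerator is at least as large as the corresponding factor in the denominator, so the whole ratio is $\ge 1$, which is the claim.

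The only genuinely delicate point is the bookkeeping around zeros in the first paragraph: one must be careful that ``no internal zeros'' is used correctly to guarantee that the block $a_{i-r},\dots,a_{j+r}$ is either entirely positive or has a zero only at an end that makes the desired inequality trivial. A clean way to phrase this, which I would adopt, is: let $[p,q]$ be the (interval) support of $\mathbf{a}$; if $[i-r,j+r]\not\subseteq[p,q]$ then one of $a_{i-r},a_{j+r}$ is zero and we are done, and otherwise all entries $a_{i-r},\dots,a_{j+r}$ are positive and the telescoping argument applies verbatim. I do not expect any further obstacle; the heart of the matter is just the monotonicity of ratios together with the elementary observation that $i\le j$ forces the numerator indices to lie entirely to the left of the denominator indices.
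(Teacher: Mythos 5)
Your argument is correct and is essentially the paper's own proof: both reduce the claim to the monotonicity of the ratio sequence $a_{k+1}/a_k$ on the positive support and compare the two telescoping products $a_i/a_{i-r}$ and $a_{j+r}/a_j$ factor by factor, using $i\le j$ to ensure the numerator indices all lie to the left of the denominator indices. Your handling of the zero/out-of-range cases via the interval support is slightly more explicit than the paper's ``without loss of generality assume $a_{i-r}a_{j+r}>0$,'' but the substance is identical.
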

\begin{proof}
	Without loss of generality, we might assume that $a_{i-r}a_{j+r}>0$. Since the sequence $\bf{a}$ has no internal zeros, we have $a_k>0$ for any $i-r \leq k \leq i+r$. By the log-concavity of $\bf{a}$, we have \begin{align}
		\dfrac{a_k}{a_{k-1}}\geq \dfrac{a_{k+1}}{a_{k}},
	\end{align} where $a_{k-1}a_k >0$. This also implies that the ratio sequence of $\bf{a}$ is a monotonically decreasing sequence.
	 Note that 
	\begin{align}
		\dfrac{a_i}{a_{i-r}}=\prod^{r-1}_{k=0} \dfrac{a_{i-k}}{a_{i-k-1}} \geq \prod^{r-1}_{k=0} \dfrac{a_{j+k+1}}{a_{j+k}}= \dfrac{a_{j+r}}{a_{j}}.
	\end{align} Hence, we obtain the inequality $a_ia_j\geq a_{i-r}a_{j+r}$.
\end{proof}
\begin{proposition}\label{prop: LU}
	 Let $\textbf{a}=\{a_i\}_{i=0}^{n}$ and $\textbf{b}=\{b_i\}_{i=0}^{m}$ be two nonnegative log-concave sequences with no internal zeros. Then, the convolution sequence $\textbf{a}*\textbf{b}$ is log-concave and unimodal. 
\end{proposition}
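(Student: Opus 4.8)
The plan is to prove that the convolution of two nonnegative log-concave sequences with no internal zeros is again log-concave and has no internal zeros (hence, by \Cref{lem: log is uni}, unimodal). The statement is a classical fact about Pólya frequency sequences, but I want a self-contained combinatorial argument using \Cref{lem: extended}. First I would dispose of the internal-zero issue: since $\mathbf{a}$ and $\mathbf{b}$ are nonnegative with no internal zeros, the support of each is an interval of consecutive integers, say $\{p,\dots,q\}$ for $\mathbf{a}$ and $\{p',\dots,q'\}$ for $\mathbf{b}$; then the set of $t$ for which $(\mathbf{a}*\mathbf{b})_t = \sum_{i+j=t} a_i b_j$ has at least one strictly positive summand is exactly the interval $\{p+p',\dots,q+q'\}$, and on that interval every coefficient is strictly positive. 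So $\mathbf{a}*\mathbf{b}$ has no internal zeros, and it suffices to establish log-concavity.

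For log-concavity I must show $(\mathbf{a}*\mathbf{b})_t^2 \geq (\mathbf{a}*\mathbf{b})_{t-1}(\mathbf{a}*\mathbf{b})_{t+1}$ for all $t$. Writing $c_t = (\mathbf{a}*\mathbf{b})_t$, expand both sides:
\begin{align*}
c_t^2 &= \sum_{i+j=t}\ \sum_{k+l=t} a_i b_j a_k b_l, \\
c_{t-1}c_{t+1} &= \sum_{i+j=t-1}\ \sum_{k+l=t+1} a_i b_j a_k b_l.
\end{align*}
The standard device is to reindex the second sum so that it runs over the same pairs of indices as the first. Concretely, in $c_{t-1}c_{t+1}$ substitute $k \mapsto k-1$ and $l \mapsto l+1$ (equivalently, shift the pair $(k,l)$ with $k+l=t+1$ to $(k-1,l+1)$ with $(k-1)+(l+1)=t$); after this shift both products range over quadruples $(i,j,k,l)$ with $i+j=k+l=t$, and the comparison reduces to showing, term by term after symmetrization, that
\begin{align*}
a_i b_j a_k b_l + a_k b_l a_i b_j \ \geq\ a_i b_j a_{k-1} b_{l+1} + a_k b_l a_{i+1} b_{j-1}
\end{align*}
summed appropriately — the cleanest formulation is to pair the term indexed by $\{(i,j),(k,l)\}$ in $c_t^2$ with the corresponding shifted term in $c_{t-1}c_{t+1}$ and invoke \Cref{lem: extended} in the form $a_i a_k \geq a_{i'} a_{k'}$ and $b_j b_l \geq b_{j'} b_{l'}$ whenever $\{i',k'\}$ is obtained from $\{i,k\}$ by spreading the two indices apart (and likewise for $\{j,l\}$). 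I would organize this by grouping, for each unordered pair of values $\{u,v\}$ with $u+v = 2t$, the contributions to $c_t^2$ from $(a_i,a_k)$ with $\{i,k\}=\{u,v\}$ against the matching contributions to $c_{t-1}c_{t+1}$, so that each group's inequality is exactly an instance of \Cref{lem: extended} applied to $\mathbf{a}$ (for the $a$-factors) and to $\mathbf{b}$ (for the $b$-factors), multiplied together since all quantities are nonnegative.

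The main obstacle is purely bookkeeping: setting up the reindexing so that the positive and negative contributions cancel in matched pairs rather than getting tangled, and making sure the "spreading apart" hypothesis of \Cref{lem: extended} is genuinely satisfied for each matched pair (i.e. that after the shift the $a$-indices have moved farther apart and simultaneously the $b$-indices have moved farther apart, or one pair stays fixed). This is where a careful choice of pairing — matching the cross-term $a_i b_j a_k b_l$ of $c_t^2$ with $a_i b_{j} a_{k} b_{l}$-type shifted terms using the symmetry $i \leftrightarrow k$, $j \leftrightarrow l$ — does all the work, and once the pairing is fixed the inequality for each pair is immediate from \Cref{lem: extended} and nonnegativity. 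Finally, having shown $\mathbf{a}*\mathbf{b}$ is nonnegative, log-concave, and free of internal zeros, unimodality follows directly from \Cref{lem: log is uni}.
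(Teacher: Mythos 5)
Your overall architecture coincides with the paper's: show that $\mathbf{a}*\mathbf{b}$ inherits the no-internal-zeros property, reduce the log-concavity inequality $c_t^2\ge c_{t-1}c_{t+1}$ (where $c_t=(\mathbf{a}*\mathbf{b})_t$) to instances of \Cref{lem: extended} applied separately to $\mathbf{a}$ and to $\mathbf{b}$, and then invoke \Cref{lem: log is uni} for unimodality. The difference is in how the reduction is organized: the paper encodes $\mathbf{a}$ and $\mathbf{b}$ as banded upper-triangular Toeplitz matrices $A$, $B$, notes via \Cref{lem: extended} that every $2\times2$ minor of each is nonnegative, and applies the Cauchy--Binet theorem to conclude the same for $AB$, whose entries are the $c_t$ and whose relevant $2\times2$ minors are exactly $c_t^2-c_{t-1}c_{t+1}$. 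That is precisely the systematic version of the ``careful choice of pairing'' that you defer to bookkeeping, and it removes the reindexing entirely.

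As written, however, your middle step has a genuine gap rather than mere bookkeeping. First, the proposed shift $(k,l)\mapsto(k-1,l+1)$ preserves $k+l$, so it does not move the index sum from $t+1$ to $t$ and does not align the two sums as claimed. Second, the displayed inequality has left-hand side $2a_ib_ja_kb_l$ and is not an instance of \Cref{lem: extended}; more importantly, the scheme of dominating each term of $c_{t-1}c_{t+1}$ individually by a matched term of $c_t^2$ is not the mechanism that works here. The grouping that does close the argument is a sum of products of two differences,
\begin{align*}
c_t^2-c_{t-1}c_{t+1}
=\sum_{u<v}\bigl(a_ua_{v-1}-a_{u-1}a_v\bigr)\bigl(b_{t-u}b_{t+1-v}-b_{t-v}b_{t+1-u}\bigr),
\end{align*}
with the convention that out-of-range entries vanish. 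Since $u\le v-1$ and $t+1-v\le t-u$, each factor is nonnegative by \Cref{lem: extended} with $r=1$, so every summand is nonnegative. This identity is exactly what Cauchy--Binet produces for the $2\times2$ minors of $AB$, which is why the paper's matrix formulation is the clean way to finish. Your handling of the internal zeros and the final appeal to \Cref{lem: log is uni} are correct.
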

\begin{proof}
	Since $\bf{a}$ and $\bf{b}$ have no internal zeros, it also holds for their convolution $\textbf{a}*\textbf{b}$. We define two $(m+n+1)\times (m+n+1)$ matrices as follows:
	\begin{align}
		A=(a_{ij}),\  a_{ij}=\left\{
		\begin{array}{ll}
			a_{j-i} &  \text{if}\ i\leq j \leq i+n, \\
			0 &  \text{if}\ i>j \;\;
			\mbox{or}\; j> i+n. 
		\end{array} \right. \\
		B=(b_{ij}),\  b_{ij}=\left\{
		\begin{array}{ll}
			b_{j-i} &  \text{if}\ i\leq j \leq i+m, \\
			0 &  \text{if}\ i>j \;\;
			\mbox{or}\; j> i+m. 
		\end{array} \right.
	\end{align} More precisely, we can express $A$ (similar for $B$) as 
	\begin{align}
		A= \scalebox{0.85}{$
\begin{pmatrix}
a_0 & a_1 & a_2 & \cdots & a_n & 0 & \cdots & 0 & 0\\
0      & a_0 & a_1 & \cdots & a_{n-1} & a_n & \cdots & 0  & 0\\
0      & 0      & a_0 & \cdots & a_{n-2} & a_{n-1} & \cdots & 0 & 0 \\
\vdots & \vdots & \vdots &  & \vdots & \vdots & & \vdots & \vdots \\
0      & 0      & 0      & \cdots & a_0 & a_1 & \cdots & a_{m-1} & a_m\\ 0& 0 & 0 &\cdots & 0 & a_0 & \cdots & a_{m-2} & a_{m-1}\\ \vdots & \vdots & \vdots & & \vdots & \vdots & & \vdots & \vdots \\ 0 & 0 & 0 & \cdots & 0 & 0 & \cdots & a_0 & a_1 \\ 0 & 0 & 0 & \cdots & 0 & 0 & \cdots & 0 & a_0
\end{pmatrix}$}.
	\end{align} According to \Cref{lem: extended}, all the $2\times 2$ minors of $A$ and $B$ are nonnegative. By the Cauchy-Binet theorem, this property also holds for $AB$. Note that the entries of $AB$ are exactly the corresponding numbers in $\textbf{a}*\textbf{b}$. Hence, by \Cref{lem: log is uni}, we conclude that $\textbf{a}*\textbf{b}$ is log-concave and unimodal. 
\end{proof}
\section{Classification of the cluster monomials of type $A_3$}\label{sec: cluster monomials}
In this section, we give a classification to determine the log-concavity and unimodality of the cluster monomials of type $A_3$, which essentially consists of three distinct types.
\begin{proposition}\label{prop: classification}
	All the cluster monomials of type $A_3$ are log-concave (or unimodal) if and only if the cluster monomials of type $A_3$ with the initial seeds in \Cref{three cases} are log-concave (or unimodal).
\end{proposition}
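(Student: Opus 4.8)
The plan is to reduce the classification of all type-$A_3$ cluster monomials to finitely many ``standard'' initial seeds by exploiting two symmetry operations that preserve log-concavity and unimodality: relabeling of cluster variables (\Cref{lem: permutation}) and sign reversal of the exchange matrix (\Cref{lem: sign}). First I would recall that log-concavity and unimodality of a cluster monomial, by \Cref{def: cm} and \Cref{remark}, depend only on the coefficient array of its Laurent expansion in the \emph{initial} cluster, and that this array is unchanged up to permutation of the variables $x_1,\dots,x_3$ when we apply $\sigma\in\mathfrak S_3$ to the initial seed: indeed by \Cref{lem: permutation}(2) a permuted initial seed produces the same family of cluster monomials with $x_i$ replaced by $x_{\sigma^{-1}(i)}$, and permuting variables manifestly does not affect the inequalities \eqref{Laurent inequality} or \eqref{eq: unimodal}. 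Likewise, by \Cref{lem: sign}, replacing $B$ by $-B$ yields exactly the same cluster variables (the mutation \eqref{cluster variable mutation} is symmetric under $b_{jk}\leftrightarrow -b_{jk}$), hence exactly the same cluster monomials, hence the same log-concavity/unimodality status.

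Next I would enumerate the skew-symmetric $3\times 3$ exchange matrices of type $A_3$ up to these two symmetries. Since we only care about the mutation-equivalence class and its associated cluster monomials in a fixed initial cluster, and since the initial cluster can be taken to be the ``same'' abstract cluster in each case, it suffices to list the exchange matrices $B$ in the type-$A_3$ mutation class modulo the action of $\mathfrak S_3$ (by simultaneous row/column permutation, \Cref{lem: permutation}(1)) and modulo $B\mapsto -B$. A type-$A_3$ exchange matrix is encoded by an orientation of one of the two unlabeled trees on three vertices compatible with $A_3$; concretely the possibilities are: the linear quiver $1\to 2\to 3$ (an ``$A_3$ straight orientation''), the linear quiver with a source or sink in the middle $1\to 2\leftarrow 3$ (equivalently $1\leftarrow 2\to 3$, which are related by $B\mapsto -B$), and — accounting for the three-cycle that appears in the mutation class of $A_3$ — the oriented $3$-cycle $1\to 2\to 3\to 1$. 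Using \Cref{lem: permutation}(1) to permute and \Cref{lem: sign} to flip signs, every type-$A_3$ seed is equivalent to one with initial exchange matrix among these three, which are precisely the three cases recorded in \Cref{three cases}. I would present this as a short finite check, perhaps with a table of the corresponding $B$-matrices.

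Finally I would assemble the equivalence. For the forward direction: if all type-$A_3$ cluster monomials are log-concave (unimodal), then in particular those arising from the initial seeds in \Cref{three cases} are. For the converse: given an arbitrary type-$A_3$ seed $\Sigma=(\mfx,B)$, by the enumeration above there is $\sigma\in\mathfrak S_3$ and a sign $\epsilon\in\{\pm1\}$ such that $\sigma(\epsilon B)$ equals one of the three standard matrices $B_0$; then the seed $(\mfx,B_0)$ is among the cases in \Cref{three cases}, its cluster monomials are log-concave (unimodal) by hypothesis, and transporting back along $\sigma^{-1}$ and the sign flip — neither of which changes the coefficient array up to a harmless permutation of the initial variables — shows the cluster monomials of $\Sigma$ are log-concave (unimodal) as well. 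The only genuinely delicate point is the enumeration step: one must be careful that ``mutation class of $A_3$'' really does collapse, under $\mathfrak S_3$ and the sign involution, to exactly the three listed representatives, including correctly handling the oriented triangle that is mutation-equivalent to the orientations of the $A_3$ Dynkin tree; I expect this bookkeeping to be the main obstacle, though it is entirely elementary and finite.
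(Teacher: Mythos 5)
Your proposal is correct and follows essentially the same route as the paper: reduce the non-isomorphic orientations of the $A_3$ quiver (including the oriented $3$-cycle in the mutation class) modulo the $\mathfrak{S}_3$-action of \Cref{lem: permutation} and the sign involution of \Cref{lem: sign}, observing that the two middle-source/middle-sink orientations collapse to one case, leaving exactly the three representatives of \Cref{three cases}. The only cosmetic difference is that you spell out the forward/converse assembly and the transport argument more explicitly than the paper does.
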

\begin{proof}
	Note that all the non-isomorphic quivers of type $A_3$ are listed in \Cref{Non-isomorphic quivers of type A3}, up to the permutation of the vertices. Hence, by \Cref{lem: permutation}, the log-concavity (or unimodality) of cluster monomials up to a permutation of the initial cluster variables is the same. Moreover, note that the exchange matrices of the case $(1)$ and $(2)$ are the same up to a sign. Then, by \Cref{lem: sign}, all the clusters and cluster variables are the same. Thus, it is sufficient to consider the log-concavity (or unimodality) associated with the three initial quivers in \Cref{three cases}, which we call the \emph{three reduced cases}.
\end{proof}
\begin{figure}[htpb]
\tikzset{every picture/.style={line width=0.75pt}} 

\begin{tikzpicture}[x=0.75pt,y=0.75pt,yscale=-1,xscale=1, scale=0.8]

\draw    (65.67,124) -- (154.17,123.5) ;
\draw [shift={(154.17,123.5)}, rotate = 359.68] [color={rgb, 255:red, 0; green, 0; blue, 0 }  ][fill={rgb, 255:red, 0; green, 0; blue, 0 }  ][line width=0.75]      (0, 0) circle [x radius= 3.35, y radius= 3.35]   ;
\draw [shift={(65.67,124)}, rotate = 359.68] [color={rgb, 255:red, 0; green, 0; blue, 0 }  ][fill={rgb, 255:red, 0; green, 0; blue, 0 }  ][line width=0.75]      (0, 0) circle [x radius= 3.35, y radius= 3.35]   ;
\draw    (65.67,124) -- (152.17,123.51) ;
\draw [shift={(154.17,123.5)}, rotate = 179.68] [color={rgb, 255:red, 0; green, 0; blue, 0 }  ][line width=0.75]    (10.93,-3.29) .. controls (6.95,-1.4) and (3.31,-0.3) .. (0,0) .. controls (3.31,0.3) and (6.95,1.4) .. (10.93,3.29)   ;
\draw    (154.17,123.5) -- (242.67,123) ;
\draw [shift={(242.67,123)}, rotate = 359.68] [color={rgb, 255:red, 0; green, 0; blue, 0 }  ][fill={rgb, 255:red, 0; green, 0; blue, 0 }  ][line width=0.75]      (0, 0) circle [x radius= 3.35, y radius= 3.35]   ;
\draw    (156.17,123.49) -- (198.01,123.25) -- (242.67,123) ;
\draw [shift={(242.67,123)}, rotate = 359.68] [color={rgb, 255:red, 0; green, 0; blue, 0 }  ][fill={rgb, 255:red, 0; green, 0; blue, 0 }  ][line width=0.75]      (0, 0) circle [x radius= 3.35, y radius= 3.35]   ;
\draw [shift={(154.17,123.5)}, rotate = 359.68] [color={rgb, 255:red, 0; green, 0; blue, 0 }  ][line width=0.75]    (10.93,-3.29) .. controls (6.95,-1.4) and (3.31,-0.3) .. (0,0) .. controls (3.31,0.3) and (6.95,1.4) .. (10.93,3.29)   ;
\draw    (306.33,123) -- (394.83,122.5) ;
\draw [shift={(394.83,122.5)}, rotate = 359.68] [color={rgb, 255:red, 0; green, 0; blue, 0 }  ][fill={rgb, 255:red, 0; green, 0; blue, 0 }  ][line width=0.75]      (0, 0) circle [x radius= 3.35, y radius= 3.35]   ;
\draw [shift={(306.33,123)}, rotate = 359.68] [color={rgb, 255:red, 0; green, 0; blue, 0 }  ][fill={rgb, 255:red, 0; green, 0; blue, 0 }  ][line width=0.75]      (0, 0) circle [x radius= 3.35, y radius= 3.35]   ;
\draw    (308.33,122.99) -- (394.83,122.5) ;
\draw [shift={(394.83,122.5)}, rotate = 359.68] [color={rgb, 255:red, 0; green, 0; blue, 0 }  ][fill={rgb, 255:red, 0; green, 0; blue, 0 }  ][line width=0.75]      (0, 0) circle [x radius= 3.35, y radius= 3.35]   ;
\draw [shift={(306.33,123)}, rotate = 359.68] [color={rgb, 255:red, 0; green, 0; blue, 0 }  ][line width=0.75]    (10.93,-3.29) .. controls (6.95,-1.4) and (3.31,-0.3) .. (0,0) .. controls (3.31,0.3) and (6.95,1.4) .. (10.93,3.29)   ;
\draw    (394.83,122.5) -- (483.33,122) ;
\draw [shift={(483.33,122)}, rotate = 359.68] [color={rgb, 255:red, 0; green, 0; blue, 0 }  ][fill={rgb, 255:red, 0; green, 0; blue, 0 }  ][line width=0.75]      (0, 0) circle [x radius= 3.35, y radius= 3.35]   ;
\draw    (394.83,122.5) -- (481.33,122.01) ;
\draw [shift={(483.33,122)}, rotate = 179.68] [color={rgb, 255:red, 0; green, 0; blue, 0 }  ][line width=0.75]    (10.93,-3.29) .. controls (6.95,-1.4) and (3.31,-0.3) .. (0,0) .. controls (3.31,0.3) and (6.95,1.4) .. (10.93,3.29)   ;
\draw [shift={(394.83,122.5)}, rotate = 359.68] [color={rgb, 255:red, 0; green, 0; blue, 0 }  ][fill={rgb, 255:red, 0; green, 0; blue, 0 }  ][line width=0.75]      (0, 0) circle [x radius= 3.35, y radius= 3.35]   ;
\draw    (354.42,223.83) -- (440.33,224) ;
\draw [shift={(442.33,224)}, rotate = 180.11] [color={rgb, 255:red, 0; green, 0; blue, 0 }  ][line width=0.75]    (10.93,-3.29) .. controls (6.95,-1.4) and (3.31,-0.3) .. (0,0) .. controls (3.31,0.3) and (6.95,1.4) .. (10.93,3.29)   ;
\draw [shift={(354.42,223.83)}, rotate = 0.11] [color={rgb, 255:red, 0; green, 0; blue, 0 }  ][fill={rgb, 255:red, 0; green, 0; blue, 0 }  ][line width=0.75]      (0, 0) circle [x radius= 3.35, y radius= 3.35]   ;
\draw    (442.33,224) -- (396.54,156.74) ;
\draw [shift={(395.42,155.08)}, rotate = 55.75] [color={rgb, 255:red, 0; green, 0; blue, 0 }  ][line width=0.75]    (10.93,-3.29) .. controls (6.95,-1.4) and (3.31,-0.3) .. (0,0) .. controls (3.31,0.3) and (6.95,1.4) .. (10.93,3.29)   ;
\draw [shift={(442.33,224)}, rotate = 235.75] [color={rgb, 255:red, 0; green, 0; blue, 0 }  ][fill={rgb, 255:red, 0; green, 0; blue, 0 }  ][line width=0.75]      (0, 0) circle [x radius= 3.35, y radius= 3.35]   ;
\draw    (395.42,155.08) -- (355.44,222.12) ;
\draw [shift={(354.42,223.83)}, rotate = 300.81] [color={rgb, 255:red, 0; green, 0; blue, 0 }  ][line width=0.75]    (10.93,-3.29) .. controls (6.95,-1.4) and (3.31,-0.3) .. (0,0) .. controls (3.31,0.3) and (6.95,1.4) .. (10.93,3.29)   ;
\draw [shift={(395.42,155.08)}, rotate = 120.81] [color={rgb, 255:red, 0; green, 0; blue, 0 }  ][fill={rgb, 255:red, 0; green, 0; blue, 0 }  ][line width=0.75]      (0, 0) circle [x radius= 3.35, y radius= 3.35]   ;
\draw    (66.33,223) -- (154.83,222.5) ;
\draw [shift={(154.83,222.5)}, rotate = 359.68] [color={rgb, 255:red, 0; green, 0; blue, 0 }  ][fill={rgb, 255:red, 0; green, 0; blue, 0 }  ][line width=0.75]      (0, 0) circle [x radius= 3.35, y radius= 3.35]   ;
\draw [shift={(66.33,223)}, rotate = 359.68] [color={rgb, 255:red, 0; green, 0; blue, 0 }  ][fill={rgb, 255:red, 0; green, 0; blue, 0 }  ][line width=0.75]      (0, 0) circle [x radius= 3.35, y radius= 3.35]   ;
\draw    (66.33,223) -- (152.83,222.51) ;
\draw [shift={(154.83,222.5)}, rotate = 179.68] [color={rgb, 255:red, 0; green, 0; blue, 0 }  ][line width=0.75]    (10.93,-3.29) .. controls (6.95,-1.4) and (3.31,-0.3) .. (0,0) .. controls (3.31,0.3) and (6.95,1.4) .. (10.93,3.29)   ;
\draw    (154.83,222.5) -- (243.33,222) ;
\draw [shift={(243.33,222)}, rotate = 359.68] [color={rgb, 255:red, 0; green, 0; blue, 0 }  ][fill={rgb, 255:red, 0; green, 0; blue, 0 }  ][line width=0.75]      (0, 0) circle [x radius= 3.35, y radius= 3.35]   ;
\draw    (154.83,222.5) -- (241.33,222.01) ;
\draw [shift={(243.33,222)}, rotate = 179.68] [color={rgb, 255:red, 0; green, 0; blue, 0 }  ][line width=0.75]    (10.93,-3.29) .. controls (6.95,-1.4) and (3.31,-0.3) .. (0,0) .. controls (3.31,0.3) and (6.95,1.4) .. (10.93,3.29)   ;
\draw [shift={(154.83,222.5)}, rotate = 359.68] [color={rgb, 255:red, 0; green, 0; blue, 0 }  ][fill={rgb, 255:red, 0; green, 0; blue, 0 }  ][line width=0.75]      (0, 0) circle [x radius= 3.35, y radius= 3.35]   ;

\draw (28.33,114) node [anchor=north west][inner sep=0.75pt]   [align=left] {(1)};
\draw (271,113.33) node [anchor=north west][inner sep=0.75pt]   [align=left] {(2)};
\draw (60.33,100.67) node [anchor=north west][inner sep=0.75pt]   [align=left] {1};
\draw (148.33,100) node [anchor=north west][inner sep=0.75pt]   [align=left] {2};
\draw (236.33,100) node [anchor=north west][inner sep=0.75pt]   [align=left] {3};
\draw (302.33,100.67) node [anchor=north west][inner sep=0.75pt]   [align=left] {1};
\draw (390.33,100) node [anchor=north west][inner sep=0.75pt]   [align=left] {2};
\draw (478.33,100) node [anchor=north west][inner sep=0.75pt]   [align=left] {3};
\draw (390,134) node [anchor=north west][inner sep=0.75pt]   [align=left] {1};
\draw (334.67,217.33) node [anchor=north west][inner sep=0.75pt]   [align=left] {2};
\draw (448.67,217.33) node [anchor=north west][inner sep=0.75pt]   [align=left] {3};
\draw (62.33,200.67) node [anchor=north west][inner sep=0.75pt]   [align=left] {1};
\draw (150.33,200) node [anchor=north west][inner sep=0.75pt]   [align=left] {2};
\draw (238.33,200) node [anchor=north west][inner sep=0.75pt]   [align=left] {3};
\draw (28.67,211.67) node [anchor=north west][inner sep=0.75pt]   [align=left] {(3)};
\draw (270.67,210.67) node [anchor=north west][inner sep=0.75pt]   [align=left] {(4)};

\end{tikzpicture}
\caption{Non-isomorphic quivers of type $A_3$}
\label{Non-isomorphic quivers of type A3}
\end{figure}
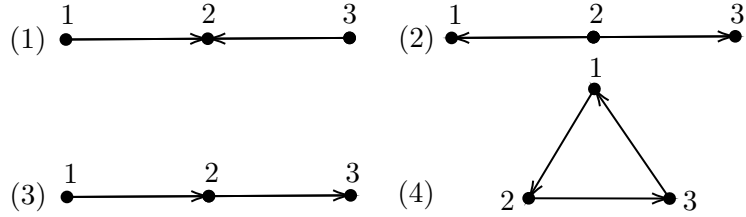

\begin{figure}[htpb]
\tikzset{every picture/.style={line width=0.75pt}} 

\begin{tikzpicture}[x=0.75pt,y=0.75pt,yscale=-1,xscale=1,scale=0.8]

\draw    (65.67,124) -- (154.17,123.5) ;
\draw [shift={(154.17,123.5)}, rotate = 359.68] [color={rgb, 255:red, 0; green, 0; blue, 0 }  ][fill={rgb, 255:red, 0; green, 0; blue, 0 }  ][line width=0.75]      (0, 0) circle [x radius= 3.35, y radius= 3.35]   ;
\draw [shift={(65.67,124)}, rotate = 359.68] [color={rgb, 255:red, 0; green, 0; blue, 0 }  ][fill={rgb, 255:red, 0; green, 0; blue, 0 }  ][line width=0.75]      (0, 0) circle [x radius= 3.35, y radius= 3.35]   ;
\draw    (65.67,124) -- (152.17,123.51) ;
\draw [shift={(154.17,123.5)}, rotate = 179.68] [color={rgb, 255:red, 0; green, 0; blue, 0 }  ][line width=0.75]    (10.93,-3.29) .. controls (6.95,-1.4) and (3.31,-0.3) .. (0,0) .. controls (3.31,0.3) and (6.95,1.4) .. (10.93,3.29)   ;
\draw    (154.17,123.5) -- (242.67,123) ;
\draw [shift={(242.67,123)}, rotate = 359.68] [color={rgb, 255:red, 0; green, 0; blue, 0 }  ][fill={rgb, 255:red, 0; green, 0; blue, 0 }  ][line width=0.75]      (0, 0) circle [x radius= 3.35, y radius= 3.35]   ;
\draw    (156.17,123.49) -- (198.01,123.25) -- (242.67,123) ;
\draw [shift={(242.67,123)}, rotate = 359.68] [color={rgb, 255:red, 0; green, 0; blue, 0 }  ][fill={rgb, 255:red, 0; green, 0; blue, 0 }  ][line width=0.75]      (0, 0) circle [x radius= 3.35, y radius= 3.35]   ;
\draw [shift={(154.17,123.5)}, rotate = 359.68] [color={rgb, 255:red, 0; green, 0; blue, 0 }  ][line width=0.75]    (10.93,-3.29) .. controls (6.95,-1.4) and (3.31,-0.3) .. (0,0) .. controls (3.31,0.3) and (6.95,1.4) .. (10.93,3.29)   ;
\draw    (306.33,123) -- (394.83,122.5) ;
\draw [shift={(394.83,122.5)}, rotate = 359.68] [color={rgb, 255:red, 0; green, 0; blue, 0 }  ][fill={rgb, 255:red, 0; green, 0; blue, 0 }  ][line width=0.75]      (0, 0) circle [x radius= 3.35, y radius= 3.35]   ;
\draw [shift={(306.33,123)}, rotate = 359.68] [color={rgb, 255:red, 0; green, 0; blue, 0 }  ][fill={rgb, 255:red, 0; green, 0; blue, 0 }  ][line width=0.75]      (0, 0) circle [x radius= 3.35, y radius= 3.35]   ;
\draw    (306.33,123) -- (392.83,122.51) ;
\draw [shift={(394.83,122.5)}, rotate = 179.68] [color={rgb, 255:red, 0; green, 0; blue, 0 }  ][line width=0.75]    (10.93,-3.29) .. controls (6.95,-1.4) and (3.31,-0.3) .. (0,0) .. controls (3.31,0.3) and (6.95,1.4) .. (10.93,3.29)   ;
\draw    (394.83,122.5) -- (483.33,122) ;
\draw [shift={(483.33,122)}, rotate = 359.68] [color={rgb, 255:red, 0; green, 0; blue, 0 }  ][fill={rgb, 255:red, 0; green, 0; blue, 0 }  ][line width=0.75]      (0, 0) circle [x radius= 3.35, y radius= 3.35]   ;
\draw    (394.83,122.5) -- (481.33,122.01) ;
\draw [shift={(483.33,122)}, rotate = 179.68] [color={rgb, 255:red, 0; green, 0; blue, 0 }  ][line width=0.75]    (10.93,-3.29) .. controls (6.95,-1.4) and (3.31,-0.3) .. (0,0) .. controls (3.31,0.3) and (6.95,1.4) .. (10.93,3.29)   ;
\draw [shift={(394.83,122.5)}, rotate = 359.68] [color={rgb, 255:red, 0; green, 0; blue, 0 }  ][fill={rgb, 255:red, 0; green, 0; blue, 0 }  ][line width=0.75]      (0, 0) circle [x radius= 3.35, y radius= 3.35]   ;
\draw    (547.42,146.83) -- (633.33,147) ;
\draw [shift={(635.33,147)}, rotate = 180.11] [color={rgb, 255:red, 0; green, 0; blue, 0 }  ][line width=0.75]    (10.93,-3.29) .. controls (6.95,-1.4) and (3.31,-0.3) .. (0,0) .. controls (3.31,0.3) and (6.95,1.4) .. (10.93,3.29)   ;
\draw [shift={(547.42,146.83)}, rotate = 0.11] [color={rgb, 255:red, 0; green, 0; blue, 0 }  ][fill={rgb, 255:red, 0; green, 0; blue, 0 }  ][line width=0.75]      (0, 0) circle [x radius= 3.35, y radius= 3.35]   ;
\draw    (635.33,147) -- (589.54,79.74) ;
\draw [shift={(588.42,78.08)}, rotate = 55.75] [color={rgb, 255:red, 0; green, 0; blue, 0 }  ][line width=0.75]    (10.93,-3.29) .. controls (6.95,-1.4) and (3.31,-0.3) .. (0,0) .. controls (3.31,0.3) and (6.95,1.4) .. (10.93,3.29)   ;
\draw [shift={(635.33,147)}, rotate = 235.75] [color={rgb, 255:red, 0; green, 0; blue, 0 }  ][fill={rgb, 255:red, 0; green, 0; blue, 0 }  ][line width=0.75]      (0, 0) circle [x radius= 3.35, y radius= 3.35]   ;
\draw    (588.42,78.08) -- (548.44,145.12) ;
\draw [shift={(547.42,146.83)}, rotate = 300.81] [color={rgb, 255:red, 0; green, 0; blue, 0 }  ][line width=0.75]    (10.93,-3.29) .. controls (6.95,-1.4) and (3.31,-0.3) .. (0,0) .. controls (3.31,0.3) and (6.95,1.4) .. (10.93,3.29)   ;
\draw [shift={(588.42,78.08)}, rotate = 120.81] [color={rgb, 255:red, 0; green, 0; blue, 0 }  ][fill={rgb, 255:red, 0; green, 0; blue, 0 }  ][line width=0.75]      (0, 0) circle [x radius= 3.35, y radius= 3.35]   ;

\draw (28.33,114) node [anchor=north west][inner sep=0.75pt]   [align=left] {(1)};
\draw (271,113.33) node [anchor=north west][inner sep=0.75pt]   [align=left] {(2)};
\draw (513.67,112.67) node [anchor=north west][inner sep=0.75pt]   [align=left] {(3)};
\draw (60.33,100.67) node [anchor=north west][inner sep=0.75pt]   [align=left] {1};
\draw (148.33,100) node [anchor=north west][inner sep=0.75pt]   [align=left] {2};
\draw (236.33,100) node [anchor=north west][inner sep=0.75pt]   [align=left] {3};
\draw (302.33,100.67) node [anchor=north west][inner sep=0.75pt]   [align=left] {1};
\draw (390.33,100) node [anchor=north west][inner sep=0.75pt]   [align=left] {2};
\draw (478.33,100) node [anchor=north west][inner sep=0.75pt]   [align=left] {3};
\draw (583,57) node [anchor=north west][inner sep=0.75pt]   [align=left] {1};
\draw (527.67,140.33) node [anchor=north west][inner sep=0.75pt]   [align=left] {2};
\draw (641.67,140.33) node [anchor=north west][inner sep=0.75pt]   [align=left] {3};

\end{tikzpicture}
\caption{Three reduced cases of type $A_3$}
\label{three cases}
\end{figure}
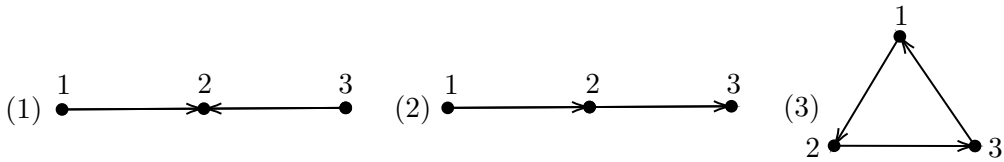

Here, we list all the clusters of these three reduced cases, see \Cref{table inward}, \Cref{table straightfoward} and \Cref{table (3)}. For convenience, we respectively call the case $(1)$, case $(2)$ and case $(3)$ of \emph{Inward type}, \emph{Straightforward type}, and \emph{Cyclic type}.
\begin{remark}
	In \Cref{Generalized associahedra}, it is a generalized associahedra of type 
$A_3$, which is also called a \emph{Stasheff polytope}. For the cluster algebras of type $A_3$, any cluster can be realized as a triangulation of a $6$-gon, see also \cite{FST08, Wil14}. In fact, the generalized associahedra is the dual complex of the cluster complex.
\end{remark}

\begin{table}[h]
\centering
\caption{Clusters of type $A_3$ for the reduced case $(1)$}
\begin{tabularx}{\textwidth}{cX cX}
\toprule
No. & Unlabeled cluster & No. &  Unlabeled cluster \\
\midrule
1 & $(x_{1}, x_{2}, x_{3})$ 
  & 8 & $(\frac{x_1x_3+x_2+1}{x_2x_3},\frac{x_1x_3+(x_2+1)^2}{x_1x_2x_3},\frac{x_2+1}{x_3})$ \\[4pt]

2 & $(\frac{x_1x_3+x_2+1}{x_1x_2},\frac{x_1x_3+1}{x_2},\frac{x_1x_3+x_2+1}{x_2x_3})$ 
  & 9 & $(\frac{x_2+1}{x_1},x_2,x_3)$ \\[4pt]

3 & $(\frac{x_2+1}{x_3},\frac{x_1x_3+(x_2+1)^2}{x_1x_2x_3},\frac{x_2+1}{x_1})$ 
  & 10 & $(x_1,\frac{x_1x_3+1}{x_2},\frac{x_1x_3+x_2+1}{x_2x_3})$ \\[4pt]

4 & $(x_1,\frac{x_1x_3+1}{x_2},x_3)$ 
  & 11 & $(\frac{x_1x_3+x_2+1}{x_1x_2},\frac{x_1x_3+(x_2+1)^2}{x_1x_2x_3},\frac{x_2+1}{x_1})$ \\[4pt]

5 & $(\frac{x_1x_3+x_2+1}{x_1x_2},\frac{x_1x_3+(x_2+1)^2}{x_1x_2x_3},\frac{x_1x_3+x_2+1}{x_2x_3})$ 
  & 12 & $(\frac{x_2+1}{x_3},x_2,x_1)$ \\[4pt]

6 & $(\frac{x_2+1}{x_3},x_2,\frac{x_2+1}{x_1})$ 
  & 13 & $(\frac{x_2+1}{x_1},\frac{x_1x_3+x_2+1}{x_1x_2},x_3)$ \\[4pt]

7 & $(x_3,\frac{x_1x_3+1}{x_2},\frac{x_1x_3+x_2+1}{x_1x_2})$ 
  & 14 & $(\frac{x_2+1}{x_3},\frac{x_1x_3+x_2+1}{x_2x_3},x_1)$ \\[4pt]
\bottomrule
\end{tabularx}
\label{table inward}
\end{table}

\begin{table}[h]
\centering
\caption{Clusters of type $A_3$ for the reduced case $(2)$}
\begin{tabularx}{\textwidth}{cX cX}
\toprule
No. & Unlabeled cluster & No. &  Unlabeled cluster \\
\midrule
1 & $(x_{1}, x_{2}, x_{3})$ 
  & 8 & $(x_1,\frac{x_1+x_3}{x_2},\frac{x_1+x_3+x_1x_2}{x_2x_3})$ \\[4pt]

2 & $(\frac{1+x_2}{x_1},x_2,x_3)$ 
  & 9 & $(x_1,\frac{x_1+x_3+x_1x_2}{x_2x_3},\frac{1+x_2}{x_3})$ \\[4pt]

3 & $(x_1,\frac{x_1+x_3}{x_2},x_3)$ 
  & 10 & $(\frac{1+x_2}{x_1},\frac{x_1+x_3+x_2x_3}{x_1x_2},\frac{(1+x_2)(x_1+x_3)}{x_1x_2x_3})$ \\[4pt]

4 & $(x_1,x_2,\frac{1+x_2}{x_3})$ 
  & 11 & $(\frac{1+x_2}{x_1},\frac{(1+x_2)(x_1+x_3)}{x_1x_2x_3},\frac{1+x_2}{x_3})$ \\[4pt]

5 & $(\frac{1+x_2}{x_1},\frac{x_1+x_3+x_2x_3}{x_1x_2},x_3)$ 
  & 12 & $(\frac{x_1+x_3+x_2x_3}{x_1x_2},\frac{x_1+x_3}{x_2},\frac{(1+x_2)(x_1+x_3)}{x_1x_2x_3})$ \\[4pt]

6 & $(\frac{1+x_2}{x_1},x_2,\frac{1+x_2}{x_3})$ 
  & 13 & $(\frac{(1+x_2)(x_1+x_3)}{x_1x_2x_3},\frac{x_1+x_3}{x_2},\frac{x_1+x_3+x_1x_2}{x_2x_3})$ \\[4pt]

7 & $(\frac{x_1+x_3+x_2x_3}{x_1x_2},\frac{x_1+x_3}{x_2},x_3)$ 
  & 14 & $(\frac{(1+x_2)(x_1+x_3)}{x_1x_2x_3},\frac{x_1+x_3+x_1x_2}{x_2x_3},\frac{1+x_2}{x_3})$ \\[4pt]
\bottomrule
\end{tabularx}
\label{table straightfoward}
\end{table}

\begin{table}[h]
\centering
\caption{Clusters of type $A_3$ for the case $(3)$}
\begin{tabularx}{\textwidth}{cX cX}
\toprule
No. & Unlabeled cluster & No. &  Unlabeled cluster \\
\midrule
1 & $(x_{1}, x_{2}, x_{3})$ 
  & 8 & $(\frac{x_1+x_3}{x_2},x_1,\frac{x_1+x_2+x_3}{x_2x_3})$ \\[4pt]

2 & $(\frac{x_2+x_3}{x_1},x_2,x_3)$ 
  & 9 & $(\frac{x_1+x_2}{x_3},x_1,\frac{x_1+x_2+x_3}{x_2x_3})$ \\[4pt]

3 & $(x_1,\frac{x_1+x_3}{x_2},x_3)$ 
  & 10 & $(\frac{x_1+x_2}{x_3},\frac{x_1+x_2+x_3}{x_1x_3},\frac{x_1+x_2+x_3}{x_2x_3})$ \\[4pt]

4 & $(x_1,x_2,\frac{x_1+x_2}{x_3})$ 
  & 11 & $(\frac{x_1+x_2}{x_3},\frac{x_1+x_2+x_3}{x_1x_3},x_2)$ \\[4pt]

5 & $(\frac{x_2+x_3}{x_1},\frac{x_1+x_2+x_3}{x_1x_2},x_3)$ 
  & 12 & $(\frac{x_2+x_3}{x_1},\frac{x_1+x_2+x_3}{x_1x_3},x_2)$ \\[4pt]

6 & $(\frac{x_1+x_3}{x_2},\frac{x_1+x_2+x_3}{x_1x_2},x_3)$ 
  & 13 & $(\frac{x_2+x_3}{x_1},\frac{x_1+x_2+x_3}{x_1x_3},\frac{x_1+x_2+x_3}{x_1x_2})$ \\[4pt]

7 & $(\frac{x_1+x_3}{x_2},\frac{x_1+x_2+x_3}{x_1x_2},\frac{x_1+x_2+x_3}{x_2x_3})$ 
  & 14 & $(\frac{x_1+x_2+x_3}{x_1x_2},\frac{x_1+x_2+x_3}{x_1x_3},\frac{x_1+x_2+x_3}{x_2x_3})$ \\[4pt]
\bottomrule
\end{tabularx}
\label{table (3)}
\end{table}

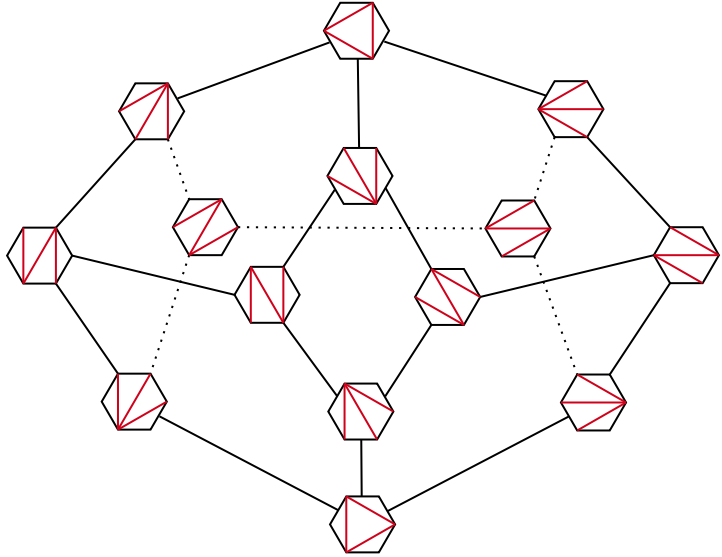
\begin{figure}[htpb]
\tikzset{every picture/.style={line width=0.75pt}} 
\begin{tikzpicture}[x=0.75pt,y=0.75pt,yscale=-1,xscale=1,scale=0.65]

\draw   (361.33,49.33) -- (348.83,70.98) -- (323.83,70.98) -- (311.33,49.33) -- (323.83,27.68) -- (348.83,27.68) -- cycle ;
\draw   (204,111.67) -- (191.5,133.32) -- (166.5,133.32) -- (154,111.67) -- (166.5,90.02) -- (191.5,90.02) -- cycle ;
\draw   (526,110) -- (513.5,131.65) -- (488.5,131.65) -- (476,110) -- (488.5,88.35) -- (513.5,88.35) -- cycle ;
\draw   (364,161.67) -- (351.5,183.32) -- (326.5,183.32) -- (314,161.67) -- (326.5,140.02) -- (351.5,140.02) -- cycle ;
\draw   (292.67,253.67) -- (280.17,275.32) -- (255.17,275.32) -- (242.67,253.67) -- (255.17,232.02) -- (280.17,232.02) -- cycle ;
\draw   (431.33,255.33) -- (418.83,276.98) -- (393.83,276.98) -- (381.33,255.33) -- (393.83,233.68) -- (418.83,233.68) -- cycle ;
\draw   (364.67,344) -- (352.17,365.65) -- (327.17,365.65) -- (314.67,344) -- (327.17,322.35) -- (352.17,322.35) -- cycle ;
\draw   (245.33,201.33) -- (232.83,222.98) -- (207.83,222.98) -- (195.33,201.33) -- (207.83,179.68) -- (232.83,179.68) -- cycle ;
\draw   (485.33,202.33) -- (472.83,223.98) -- (447.83,223.98) -- (435.33,202.33) -- (447.83,180.68) -- (472.83,180.68) -- cycle ;
\draw   (366,431.33) -- (353.5,452.98) -- (328.5,452.98) -- (316,431.33) -- (328.5,409.68) -- (353.5,409.68) -- cycle ;
\draw   (118,223.33) -- (105.5,244.98) -- (80.5,244.98) -- (68,223.33) -- (80.5,201.68) -- (105.5,201.68) -- cycle ;
\draw   (614.67,223) -- (602.17,244.65) -- (577.17,244.65) -- (564.67,223) -- (577.17,201.35) -- (602.17,201.35) -- cycle ;
\draw   (190.67,336.33) -- (178.17,357.98) -- (153.17,357.98) -- (140.67,336.33) -- (153.17,314.68) -- (178.17,314.68) -- cycle ;
\draw   (543.33,337) -- (530.83,358.65) -- (505.83,358.65) -- (493.33,337) -- (505.83,315.35) -- (530.83,315.35) -- cycle ;
\draw    (199,101.67) -- (315.67,58.33) ;
\draw    (357.67,57.67) -- (482.33,99.67) ;
\draw  [dash pattern={on 0.84pt off 2.51pt}]  (191.5,133.32) -- (207.83,179.68) ;
\draw  [dash pattern={on 0.84pt off 2.51pt}]  (488.5,131.65) -- (472.83,180.68) ;
\draw  [dash pattern={on 0.84pt off 2.51pt}]  (245.33,201.33) -- (435.33,202.33) ;
\draw    (319.67,172.33) -- (280.17,232.02) ;
\draw    (358.67,171) -- (393.83,233.68) ;
\draw    (280.17,275.32) -- (321.67,332.33) ;
\draw    (393.83,276.98) -- (358.33,332.33) ;
\draw  [dash pattern={on 0.84pt off 2.51pt}]  (207.83,222.98) -- (178.17,314.68) ;
\draw  [dash pattern={on 0.84pt off 2.51pt}]  (472.83,223.98) -- (505.83,315.35) ;
\draw    (166.5,133.32) -- (105.5,201.68) ;
\draw    (105.5,244.98) -- (153.17,314.68) ;
\draw    (513.5,131.65) -- (577.17,201.35) ;
\draw    (577.17,244.65) -- (530.83,315.35) ;
\draw    (340,365.33) -- (340.33,409.67) ;
\draw    (118,223.33) -- (242.67,253.67) ;
\draw    (431.33,255.33) -- (564.67,223) ;
\draw    (185,347.67) -- (322.33,420.33) ;
\draw    (360.33,421) -- (499.67,347.67) ;
\draw    (337.33,70.67) -- (338.33,139.67) ;
\draw [color={rgb, 255:red, 208; green, 2; blue, 27 }  ,draw opacity=1 ]   (351.5,140.02) -- (351.5,183.32) ;
\draw [color={rgb, 255:red, 208; green, 2; blue, 27 }  ,draw opacity=1 ]   (326.5,140.02) -- (351.5,183.32) ;
\draw [color={rgb, 255:red, 208; green, 2; blue, 27 }  ,draw opacity=1 ]   (314,161.67) -- (351.5,183.32) ;
\draw [color={rgb, 255:red, 208; green, 2; blue, 27 }  ,draw opacity=1 ]   (255.17,232.02) -- (255.17,275.32) ;
\draw [color={rgb, 255:red, 208; green, 2; blue, 27 }  ,draw opacity=1 ]   (255.17,232.02) -- (280.17,275.32) ;
\draw [color={rgb, 255:red, 208; green, 2; blue, 27 }  ,draw opacity=1 ]   (280.17,232.02) -- (280.17,275.32) ;
\draw [color={rgb, 255:red, 208; green, 2; blue, 27 }  ,draw opacity=1 ]   (381.33,255.33) -- (418.83,276.98) ;
\draw [color={rgb, 255:red, 208; green, 2; blue, 27 }  ,draw opacity=1 ]   (393.83,233.68) -- (418.83,276.98) ;
\draw [color={rgb, 255:red, 208; green, 2; blue, 27 }  ,draw opacity=1 ]   (393.83,233.68) -- (431.33,255.33) ;
\draw [color={rgb, 255:red, 208; green, 2; blue, 27 }  ,draw opacity=1 ]   (327.17,322.35) -- (364.67,344) ;
\draw [color={rgb, 255:red, 208; green, 2; blue, 27 }  ,draw opacity=1 ]   (327.17,322.35) -- (352.17,365.65) ;
\draw [color={rgb, 255:red, 208; green, 2; blue, 27 }  ,draw opacity=1 ]   (327.17,322.35) -- (327.17,365.65) ;
\draw [color={rgb, 255:red, 208; green, 2; blue, 27 }  ,draw opacity=1 ]   (311.33,49.33) -- (348.83,70.98) ;
\draw [color={rgb, 255:red, 208; green, 2; blue, 27 }  ,draw opacity=1 ]   (348.83,27.68) -- (348.83,70.98) ;
\draw [color={rgb, 255:red, 208; green, 2; blue, 27 }  ,draw opacity=1 ]   (348.83,27.68) -- (311.33,49.33) ;
\draw [color={rgb, 255:red, 208; green, 2; blue, 27 }  ,draw opacity=1 ]   (191.5,90.02) -- (154,111.67) ;
\draw [color={rgb, 255:red, 208; green, 2; blue, 27 }  ,draw opacity=1 ]   (191.5,90.02) -- (166.5,133.32) ;
\draw [color={rgb, 255:red, 208; green, 2; blue, 27 }  ,draw opacity=1 ]   (191.5,90.02) -- (191.5,133.32) ;
\draw [color={rgb, 255:red, 208; green, 2; blue, 27 }  ,draw opacity=1 ]   (476,110) -- (513.5,131.65) ;
\draw [color={rgb, 255:red, 208; green, 2; blue, 27 }  ,draw opacity=1 ]   (476,110) -- (526,110) ;
\draw [color={rgb, 255:red, 208; green, 2; blue, 27 }  ,draw opacity=1 ]   (476,110) -- (513.5,88.35) ;
\draw [color={rgb, 255:red, 208; green, 2; blue, 27 }  ,draw opacity=1 ]   (80.5,201.68) -- (80.5,244.98) ;
\draw [color={rgb, 255:red, 208; green, 2; blue, 27 }  ,draw opacity=1 ]   (105.5,201.68) -- (105.5,244.98) ;
\draw [color={rgb, 255:red, 208; green, 2; blue, 27 }  ,draw opacity=1 ]   (80.5,244.98) -- (105.5,201.68) ;
\draw [color={rgb, 255:red, 208; green, 2; blue, 27 }  ,draw opacity=1 ]   (153.17,357.98) -- (190.67,336.33) ;
\draw [color={rgb, 255:red, 208; green, 2; blue, 27 }  ,draw opacity=1 ]   (153.17,357.98) -- (178.17,314.68) ;
\draw [color={rgb, 255:red, 208; green, 2; blue, 27 }  ,draw opacity=1 ]   (153.17,357.98) -- (153.17,314.68) ;
\draw [color={rgb, 255:red, 208; green, 2; blue, 27 }  ,draw opacity=1 ]   (195.33,201.33) -- (232.83,179.68) ;
\draw [color={rgb, 255:red, 208; green, 2; blue, 27 }  ,draw opacity=1 ]   (232.83,179.68) -- (207.83,222.98) ;
\draw [color={rgb, 255:red, 208; green, 2; blue, 27 }  ,draw opacity=1 ]   (207.83,222.98) -- (245.33,201.33) ;
\draw [color={rgb, 255:red, 208; green, 2; blue, 27 }  ,draw opacity=1 ]   (472.83,180.68) -- (435.33,202.33) ;
\draw [color={rgb, 255:red, 208; green, 2; blue, 27 }  ,draw opacity=1 ]   (435.33,202.33) -- (485.33,202.33) ;
\draw [color={rgb, 255:red, 208; green, 2; blue, 27 }  ,draw opacity=1 ]   (485.33,202.33) -- (447.83,223.98) ;
\draw [color={rgb, 255:red, 208; green, 2; blue, 27 }  ,draw opacity=1 ]   (577.17,201.35) -- (614.67,223) ;
\draw [color={rgb, 255:red, 208; green, 2; blue, 27 }  ,draw opacity=1 ]   (564.67,223) -- (602.17,244.65) ;
\draw [color={rgb, 255:red, 208; green, 2; blue, 27 }  ,draw opacity=1 ]   (614.67,223) -- (564.67,223) ;
\draw [color={rgb, 255:red, 208; green, 2; blue, 27 }  ,draw opacity=1 ]   (366,431.33) -- (328.5,452.98) ;
\draw [color={rgb, 255:red, 208; green, 2; blue, 27 }  ,draw opacity=1 ]   (328.5,409.68) -- (366,431.33) ;
\draw [color={rgb, 255:red, 208; green, 2; blue, 27 }  ,draw opacity=1 ]   (328.5,452.98) -- (328.5,409.68) ;
\draw [color={rgb, 255:red, 208; green, 2; blue, 27 }  ,draw opacity=1 ]   (543.33,337) -- (493.33,337) ;
\draw [color={rgb, 255:red, 208; green, 2; blue, 27 }  ,draw opacity=1 ]   (543.33,337) -- (505.83,315.35) ;
\draw [color={rgb, 255:red, 208; green, 2; blue, 27 }  ,draw opacity=1 ]   (543.33,337) -- (505.83,358.65) ;

\end{tikzpicture}
\caption{The generalized associahedra of type $A_3$}
\label{Generalized associahedra}
\end{figure}

\section{Log-concavity and unimodality of cluster monomials of type $A_3$}\label{sec: main}
In this section, we focus on proving \Cref{An conj} and the unimodality for all the cluster algebras of type $A_3$. Then, we give a refined conjecture of \Cref{An conj}.
\subsection{Main theorem} At this point, we are ready to exhibit our main result.
\begin{theorem}\label{thm: main result}
	The cluster monomials of type $A_3$ are log-concave and unimodal.
\end{theorem}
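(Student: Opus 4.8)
The plan is to dispatch the easy cases with \Cref{prop: classification} and a homogeneity observation, reduce the remaining content to a single variable direction, and then control that direction with the real-rootedness of Jacobi polynomials together with Newton's inequalities. By \Cref{prop: classification} it suffices to establish log-concavity and unimodality for the cluster monomials of the three reduced cases of \Cref{three cases}, whose clusters are tabulated in \Cref{table inward}, \Cref{table straightfoward} and \Cref{table (3)}. Fix such a cluster $(y_1,y_2,y_3)$ and exponents $m_1,m_2,m_3\in\mbZ_{\ge 0}$; by \Cref{same up} the monomial $y_1^{m_1}y_2^{m_2}y_3^{m_3}$ and the honest polynomial $N_1^{m_1}N_2^{m_2}N_3^{m_3}$, with $N_i$ the numerator of $y_i$, share their log-concavity and unimodality behaviour with respect to $\mfx=(x_1,x_2,x_3)$. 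Since log-concavity in the sense of \Cref{log-concave2} is exactly the statement that, for each $j$, every one-variable coefficient slice in $x_j$ is log-concave, and since \Cref{lem: log is uni} turns ``nonnegative, log-concave, no internal zeros'' into ``unimodal'', the theorem reduces to checking, for each reduced case, each cluster, each $(m_1,m_2,m_3)$ and each $j\in\{1,2,3\}$, that the $x_j$-slices of $N_1^{m_1}N_2^{m_2}N_3^{m_3}$ are nonnegative, log-concave and free of internal zeros.

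The Cyclic case (\Cref{table (3)}) is then immediate: every numerator there is homogeneous of degree $1$ in $(x_1,x_2,x_3)$, so each cluster monomial is a Laurent monomial times a homogeneous polynomial, and fixing the exponents of any two of the variables pins down the exponent of the third; hence every one-variable slice has at most one nonzero coefficient, which is trivially log-concave, unimodal and without internal zeros. The Inward and Straightforward cases share a weaker but still decisive structure. In \Cref{table inward}, $x_1$ and $x_3$ enter every numerator only through the product $x_1x_3$ or as the cluster variables $x_1,x_3$ themselves; in \Cref{table straightfoward}, every numerator is, as a polynomial over $\mbZ[1+x_2]$, homogeneous in $(x_1,x_3)$ jointly (the ``atoms'' being $1+x_2$, $x_2$, $x_1+x_3$, $x_1(1+x_2)+x_3$, $x_1+(1+x_2)x_3$, $x_1$ and $x_3$, with the two ``twisted'' atoms never occurring together in one cluster). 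In either situation, once the $x_2$-exponent and one of the $x_1$- or $x_3$-exponents are fixed the remaining one is forced, so the $x_1$- and $x_3$-slices are again single-term, and only the $x_2$-direction needs real work.

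For the $x_2$-direction I would run the binomial expansion. Extracting the coefficient of a fixed power of $x_1x_3$ (Inward), resp.\ of $x_1^a x_3^{D-a}$ with $D$ the total $(x_1,x_3)$-degree (Straightforward), and writing $u=1+x_2$, one finds that the surviving one-variable polynomial equals $u^{s}$ times $\big[x^a\big]\!\big((1+x)^p(1+ux)^q\big)$ for integers $p,q,s,a$ read off from the cluster and from $(m_1,m_2,m_3)$; the interaction of $x_1x_3+x_2+1$ with $x_1x_3+(x_2+1)^2$ (Inward), resp.\ of $x_1+x_3$ with $x_1(1+x_2)+x_3$ (Straightforward), is exactly what produces the nondegenerate such polynomial. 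By \Cref{lem: real roots}, $\big[x^a\big]\!\big((1+x)^p(1+ux)^q\big)$ is, up to an affine reparametrisation, a Jacobi polynomial $P_d^{(\alpha,\beta)}$ --- equivalently a terminating Gauss hypergeometric ${}_2F_1$ --- with admissible parameters, hence has only real zeros, and, its coefficients being nonnegative, only nonpositive ones. The crucial point is that passing from $u$ to $x_2=u-1$ preserves real-rootedness (it merely translates the zeros) --- unlike the property of having a log-concave coefficient sequence, which the translation $u\mapsto u-1$ need not preserve --- so the polynomial is real-rooted in $x_2$ as well; Newton's inequalities (\Cref{thm: newton}) then force its $x_2$-coefficient sequence to be log-concave (in fact ultra-log-concave), and nonnegativity of the coefficients excludes internal zeros. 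A few genuinely degenerate slices --- a pure power of $1+x_2$ (handled by \Cref{lem: binomial1}), a single term, or a lower-degree Jacobi polynomial coming from a truncated parameter regime --- are immediate, and alternatively several of these can be absorbed into the convolution \Cref{prop: LU}.

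Combining the three cases, every cluster monomial of each reduced case is log-concave in each of $x_1,x_2,x_3$ and has no internal zeros, hence is unimodal by \Cref{lem: log is uni}; \Cref{prop: classification} then yields \Cref{thm: main result}. I expect the main obstacle to be the third step: organising the multi-index expansion so that the surviving $x_2$-slice is \emph{exactly} the polynomial $\big[x^a\big]\!\big((1+x)^p(1+ux)^q\big)$, and then verifying that the parameters fall in the range where \Cref{lem: real roots} actually guarantees real zeros --- in particular handling the truncated regimes $a>p$ or $a>q$ where binomial coefficients vanish --- before Newton's inequalities can be applied. This hypergeometric input, needed to control the location of the zeros uniformly in $(m_1,m_2,m_3)$, is precisely the ingredient absent from the type $A_2$ treatment of \cite{CHS24} and is what makes $A_3$ ``substantially more intricate''.
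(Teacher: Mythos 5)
Your proposal is correct and follows essentially the same route as the paper: reduction to the three seeds of \Cref{three cases} via \Cref{prop: classification}, trivialisation of the $x_1$- and $x_3$-directions by joint homogeneity (resp.\ the $x_1x_3$-structure of the inward tables), and control of the $x_2$-direction by identifying each coefficient slice with a terminating ${}_2F_1$, i.e.\ a Jacobi polynomial, so that \Cref{lem: real roots} gives real-rootedness and \Cref{thm: newton} gives log-concavity; your observation that the translation $u\mapsto u-1$ preserves real-rootedness but not log-concavity of the coefficient sequence is exactly the pivot on which the paper's argument turns. The one genuine divergence is the second straightforward family $(x_1+x_3+x_2x_3)^a(x_1+x_3)^b[(1+x_2)(x_1+x_3)]^c$: there the paper abandons the hypergeometric machinery, writes the relevant $x_2$-slice as a convolution of the two log-concave sequences $u_r=\binom{a}{r}\binom{a+b+c-r}{k}$ and $v_r=\binom{c}{r}$, and invokes \Cref{prop: LU} via Cauchy--Binet; your plan instead exploits $x_1+x_3+x_2x_3=x_1+(1+x_2)x_3$ to fold this family into the same $[z^k]\big((1+z)^{b+c}(1+uz)^a\big)$ shape (up to a power of $u$ and a coefficient reversal, both of which preserve real-rootedness), which works and is arguably more uniform, at the cost of one more pass through the truncated-parameter analysis that you rightly flag as the delicate step. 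Two small corrections: the reparametrisation taking the slice to a Jacobi polynomial is the M\"obius substitution $x=\frac{1+t}{1-t}$ together with the prefactor $(1-t)^N$ coming from a Pfaff transformation, not an affine change of variable (though real-rootedness transfers all the same); and the absence of internal zeros follows from log-concavity combined with nonnegativity, not from nonnegativity alone.
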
 The main proof of \Cref{thm: main result} will be presented in the next subsection. Firstly, we turn to an illustrative example as follows. 
\begin{example}\label{ex: log-concavity of A3}
	Take a cluster monomial with respect to No. $13$ cluster in \Cref{table straightfoward}, and with the exponent vector $(1,1,1)$. Then, this cluster monomial can be written as  
	\begin{align}
		& \frac{(1+x_2)(x_1+x_3)}{x_1x_2x_3}\cdot \frac{x_1+x_3}{x_2}\cdot\frac{x_1+x_3+x_1x_2}{x_2x_3}\\ =& \frac{x_1^3(1+2x_2+x_2^2)+x_1^2(3+5x_2+2x_2^2)x_3+x_1(3+4x_2+x_2^2)x_3^2+(1+x_2)x_3^3}{x_1x_2^3x_3^2}.
	\end{align} By a direct calculation, we claim that this Laurent polynomial is log-concave and unimodal. 
\end{example}
\subsection{Proof of the main theorem}
In this subsection, we will prove \Cref{thm: main result} by respectively considering three reduced types. According to the Laurent phenomenon and \Cref{same up}, it is sufficient to consider only the numerator of each cluster monomial. 

\subsubsection{Inward type} Note that this type corresponds to \Cref{table inward}. Hence, by direct observation, it is sufficient to prove that $(x_1x_3+1)^a(x_1x_3+x_2+1)^b$ and $(x_1x_3+x_2+1)^a[x_1x_3+(x_2+1)^2]^b(x_2+1)^c$ are log-concave and unimodal for any $a,b,c \in \mbN$.

Firstly, let us consider $(x_1x_3+1)^a(x_1x_3+x_2+1)^b$. Its expansion formula is given by
\begin{align}
		&(x_1x_3+1)^a(x_1x_3+x_2+1)^b\\ =& (x_1x_3+1)^a[x_2+(x_1x_3+1)]^b \\ =&\sum\limits_{i=0}^b\binom{b}{i}x_2^{i}(x_1x_3+1)^{a+b-i} \\ =& 
\sum\limits_{i=0}^b\sum\limits_{j=0}^{a+b-i}\binom{b}{i}\binom{a+b-i}{j}x_1^{j}x_2^{i}x_3^{j}.
\end{align}
If we fix the exponents of $x_2$ and $x_3$ (or $x_1$ and $x_2$), the exponent of $x_1$ (or $x_3$) is fixed automatically. Hence, there are no three consecutive terms associated with the variable $x_1$ (or $x_3$), which satisfies the log-concavity and unimodality in this direction. If we fix the exponents $j$ of $x_1$ and $x_3$, the coefficients of $x_1^{j}x_2^{i-1}x_3^j$, $x_1^{j}x_2^{i}x_3^j$ and $x_1^{j}x_2^{i+1}x_3^j$ are respectively $\binom{b}{i}\binom{a+b-i}{j-1}$, $\binom{b}{i}\binom{a+b-i}{j}$ and $\binom{b}{i}\binom{a+b-i}{j+1}$. By \Cref{lem: binomial1}, we have 
\begin{align}
	\left[\binom{b}{i}\binom{a+b-i}{j}\right]^2\geq \binom{b}{i}\binom{a+b-i}{j-1}\cdot\binom{b}{i}\binom{a+b-i}{j+1}.
\end{align} Hence, the log-concavity and unimodality of $(x_1x_3+1)^a(x_1x_2+x_2+1)^b$ is obtained.

In the following, we concentrate on the much more complicated monomial $(x_1x_3+x_2+1)^a[x_1x_3+(x_2+1)^2]^b(x_2+1)^c$ with $a,b,c\in \mbN$. Beforehand, we need to introduce some preliminary definitions and lemmas. 
\begin{definition}[\emph{Gauss hypergeometric function}]\label{def: hyper}
	The Gauss hypergeometric function ${}_2F_1$ is defined by the power series
\begin{align}
{}_2F_1(a,b;c;z)
=
\sum_{n=0}^{\infty}
\frac{(a)_n (b)_n}{(c)_n}
\frac{z^n}{n!},
\end{align}
where $(q)_n = q(q+1)\cdots(q+n-1)$ denotes the rising Pochhammer symbol, with
$(q)_0 = 1$.
This series converges absolutely for $|z|<1$.
\end{definition}
\begin{lemma}[cf. \cite{OLBC10}]\label{lem: Pfaff}
	The Pfaff transformations hold that 
		\begin{enumerate}
			\item ${}_2F_1(a,b;c;z)=(1-z)^{-a}{}_2F_1(a,c-b;c;\frac{z}{z-1})$, $|\arg(1-z)|<\pi$.
			\item ${}_2F_1(a,b;c;z)=(1-z)^{-b}{}_2F_1(c-a,b;c;\frac{z}{z-1})$, $|\arg(1-z)|<\pi$.
		\end{enumerate}
\end{lemma}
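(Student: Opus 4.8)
The plan is to prove the first Pfaff transformation via the Euler integral representation of ${}_2F_1$ and then deduce the second from the symmetry of the hypergeometric function in its first two parameters. First I would recall the Euler integral formula
\[
{}_2F_1(a,b;c;z)=\frac{\Gamma(c)}{\Gamma(b)\Gamma(c-b)}\int_0^1 t^{b-1}(1-t)^{c-b-1}(1-zt)^{-a}\,dt,
\]
valid for $\Re(c)>\Re(b)>0$ and $z$ in the cut plane $|\arg(1-z)|<\pi$. This representation itself follows by expanding $(1-zt)^{-a}=\sum_n \frac{(a)_n}{n!}(zt)^n$ as a binomial series, integrating term by term against the Beta kernel, and recognizing $\frac{\Gamma(b+n)\Gamma(c-b)}{\Gamma(c+n)}=\frac{(b)_n}{(c)_n}\cdot\frac{\Gamma(b)\Gamma(c-b)}{\Gamma(c)}$, which reassembles the defining series of ${}_2F_1(a,b;c;z)$ in \Cref{def: hyper}.

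Then I would perform the reflection substitution $t\mapsto 1-u$ in the integral. Under this change the Beta kernel $t^{b-1}(1-t)^{c-b-1}$ becomes $u^{c-b-1}(1-u)^{b-1}$, while the factor $(1-zt)^{-a}$ becomes $(1-z+zu)^{-a}$. The key algebraic step is the factorization
\[
1-z+zu=(1-z)\Bigl(1-\tfrac{z}{z-1}\,u\Bigr),
\]
so that $(1-z+zu)^{-a}=(1-z)^{-a}\bigl(1-\tfrac{z}{z-1}u\bigr)^{-a}$, where the branch of $(1-z)^{-a}$ is the principal one fixed by $|\arg(1-z)|<\pi$. Pulling $(1-z)^{-a}$ outside the integral and reading the remaining integral back through the Euler formula, now with parameters $(a,c-b;c)$ and argument $\frac{z}{z-1}$, yields exactly statement (1). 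To remove the auxiliary restriction $\Re(c)>\Re(b)>0$, I would invoke analytic continuation: both sides are holomorphic in $a,b,c$ away from the poles of the Gamma factors and in $z$ on the cut plane, and they agree on a nonempty open set, hence everywhere they are simultaneously defined.

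For statement (2), I would use the elementary symmetry ${}_2F_1(a,b;c;z)={}_2F_1(b,a;c;z)$, which is immediate from the defining series since $(a)_n(b)_n$ is symmetric in $a$ and $b$. Applying (1) with the roles of $a$ and $b$ interchanged gives ${}_2F_1(b,a;c;z)=(1-z)^{-b}{}_2F_1(b,c-a;c;\frac{z}{z-1})$, and rewriting both hypergeometric functions by the same symmetry converts this into $(1-z)^{-b}{}_2F_1(c-a,b;c;\frac{z}{z-1})$, which is precisely statement (2).

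I expect the only genuinely delicate point to be the bookkeeping of branches and domains rather than the algebra. One must check that the single-valued branch of $(1-z)^{-a}$ selected by $|\arg(1-z)|<\pi$ is consistent with the branch implicit in the right-hand hypergeometric series, which converges only for $\bigl|\frac{z}{z-1}\bigr|<1$, i.e. $\Re z<\tfrac12$, and then rely on analytic continuation to extend the identity across the full cut plane. An alternative route, which avoids the integral entirely, is to verify that both sides satisfy the hypergeometric differential equation in $z$ with the same value and first derivative at $z=0$; I would keep this in reserve as a cross-check but regard the Euler-integral argument as the cleaner primary proof.
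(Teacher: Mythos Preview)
Your proposal is correct and in fact goes well beyond what the paper does: the paper states \Cref{lem: Pfaff} with a ``cf.\ \cite{OLBC10}'' and provides no proof whatsoever, treating the Pfaff transformations as a classical fact to be quoted. Your Euler-integral derivation followed by the $a\leftrightarrow b$ symmetry argument is the standard textbook route and is carried out carefully, including the branch and analytic-continuation remarks; there is nothing to correct.
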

\begin{definition}[\emph{Jacobi polynomial}]\label{lem: Jacobi}
For parameters $\alpha,\beta>-1$, the Jacobi polynomial
$P_n^{(\alpha,\beta)}(x)$ of degree $n$ can be expressed in terms of the Gauss
hypergeometric function as
\begin{align}
P_n^{(\alpha,\beta)}(x)
=
\frac{(\alpha+1)_n}{n!}\,
{}_2F_1(-n, 1+\alpha+\beta+n; \alpha+1; \frac{1-x}{2}).
\end{align}
In particular, since one of the upper parameters equals to $-n$, the hypergeometric
series terminates and reduces to a polynomial of degree $n$.
\end{definition}
\begin{lemma}[cf. \cite{Sze75}]\label{lem: real roots}
	All the zeros of the Jacobi polynomial $P_n^{(\alpha,\beta)}(x)$ are real, simple and lie in the open interval $(-1,1)$.
\end{lemma}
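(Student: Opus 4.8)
The plan is to deduce the statement from the classical principle that the zeros of a family of polynomials orthogonal with respect to a positive weight on an interval are real, simple, and interior to that interval. The relevant weight here is $w(x)=(1-x)^{\alpha}(1+x)^{\beta}$, which for $\alpha,\beta>-1$ is positive and integrable on $(-1,1)$, and the Jacobi polynomials $P_n^{(\alpha,\beta)}$ form an orthogonal system for $\langle f,g\rangle=\int_{-1}^{1}f(x)g(x)w(x)\,dx$. To keep the argument self-contained starting from \Cref{lem: Jacobi}, I would first establish orthogonality via the Rodrigues representation
\begin{align}
P_n^{(\alpha,\beta)}(x)=\frac{(-1)^n}{2^n n!}\,(1-x)^{-\alpha}(1+x)^{-\beta}\,\frac{d^n}{dx^n}\Big[(1-x)^{n+\alpha}(1+x)^{n+\beta}\Big],
\end{align}
which matches the hypergeometric normalization of \Cref{lem: Jacobi} after expanding both sides and comparing coefficients. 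For any polynomial $q$ with $\deg q<n$, writing $P_n^{(\alpha,\beta)}(x)w(x)$ as a constant multiple of $\frac{d^n}{dx^n}[(1-x)^{n+\alpha}(1+x)^{n+\beta}]$ and integrating by parts $n$ times transfers all derivatives onto $q$, annihilating it. The boundary terms vanish precisely because $\alpha,\beta>-1$: differentiating $(1-x)^{n+\alpha}(1+x)^{n+\beta}$ at most $n-1$ times leaves a factor $(1-x)^{\alpha+1}$ or higher at $x=1$ and $(1+x)^{\beta+1}$ or higher at $x=-1$, both of which vanish there. Hence $\langle P_n^{(\alpha,\beta)},q\rangle=0$ for all $\deg q<n$.

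With orthogonality secured, the heart of the argument is a sign-change count. Let $x_1,\dots,x_k$ be the points of $(-1,1)$ at which $P_n^{(\alpha,\beta)}$ changes sign, that is, its real roots of odd multiplicity in the open interval, and put $q(x)=\prod_{i=1}^{k}(x-x_i)$. At each $x_i$ both $P_n^{(\alpha,\beta)}$ and $q$ reverse sign, while at every other point of $(-1,1)$ neither factor changes sign; therefore the continuous function $P_n^{(\alpha,\beta)}(x)\,q(x)$ keeps a constant sign on $(-1,1)$ and is not identically zero. Since $w>0$ there, the integral $\int_{-1}^{1}P_n^{(\alpha,\beta)}(x)\,q(x)\,w(x)\,dx$ is nonzero. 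If $k<n$ held, then $\deg q=k<n$ and the orthogonality just proved would force this integral to vanish, a contradiction; thus $k\ge n$.

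It remains to assemble the conclusion. By \Cref{lem: Jacobi}, $P_n^{(\alpha,\beta)}$ has degree exactly $n$, so it has at most $n$ roots counted with multiplicity and in particular at most $n$ sign-change points; combined with $k\ge n$ this gives $k=n$. Thus $P_n^{(\alpha,\beta)}$ already has $n$ distinct real roots $x_1,\dots,x_n$ in $(-1,1)$, and since their multiplicities are positive integers summing to at most $n$, each multiplicity must equal $1$, so all zeros are simple. This proves that every zero of $P_n^{(\alpha,\beta)}$ is real, simple, and lies in $(-1,1)$. I expect the main obstacle to be the orthogonality step: reconciling the hypergeometric normalization of \Cref{lem: Jacobi} with the Rodrigues formula, and carefully verifying that the hypothesis $\alpha,\beta>-1$ is exactly what makes both the weight integrable and the boundary terms vanish. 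Once orthogonality is in place, the sign-counting argument is short and entirely elementary.
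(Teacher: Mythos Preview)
Your argument is the classical one and is correct. Note, however, that the paper does not supply its own proof of \Cref{lem: real roots}: it is stated with the attribution ``cf.\ \cite{Sze75}'' and used as a black box. What you have written is precisely the standard orthogonality-plus-sign-change proof found in Szeg\H{o}'s text, so there is no discrepancy to report; you have simply filled in the omitted classical argument rather than diverged from anything in the paper.
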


Now, let us recall the well-known \emph{Newton inequalities} as follows.
\begin{lemma}[cf. \cite{Bre89}, \cite{HLP52}]\label{thm: newton}
	Let $f(x)=\sum_{k=0}^na_kx^k$ be a polynomial with nonnegative coefficients. If all the zeros of $f(x)$ are real, then the coefficient sequence $\{a_0,\dots,a_n\}$ is log-concave.
\end{lemma}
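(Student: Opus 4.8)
The plan is to derive the stronger \emph{normalized} Newton inequalities $\frac{a_k^2}{\binom{n}{k}^2}\geq \frac{a_{k-1}a_{k+1}}{\binom{n}{k-1}\binom{n}{k+1}}$ first, and then to upgrade them to the plain log-concavity $a_k^2\geq a_{k-1}a_{k+1}$ by feeding in the log-concavity of binomial coefficients from \Cref{lem: binomial1}. The whole argument rests on two elementary facts that preserve the property of having only real zeros: (i) differentiation, since by Rolle's theorem $f'$ acquires a real zero strictly between any two consecutive real zeros of $f$ and inherits a zero of multiplicity $r-1$ at each zero of multiplicity $r\geq 2$, so a degree-$d$ real-rooted polynomial has a degree-$(d-1)$ real-rooted derivative; and (ii) the reversal $f(x)\mapsto x^{\deg f}f(1/x)$, which, when the constant term is nonzero, sends each real zero $r$ to the real zero $1/r$ and preserves the degree. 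I may assume $a_n\neq 0$ (otherwise replace $n$ by $\deg f$) and fix an interior index $1\leq k\leq n-1$; if $a_{k-1}a_{k+1}=0$ the desired inequality $a_k^2\geq a_{k-1}a_{k+1}$ is immediate, so I reduce to the case $a_{k-1},a_{k+1}>0$.

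The core step is to isolate the three coefficients $a_{k-1},a_k,a_{k+1}$ by collapsing $f$ to a quadratic via the two operations above. First I would differentiate $f$ exactly $k-1$ times; this annihilates all terms below degree $k-1$ and produces a real-rooted polynomial $g$ of degree $m:=n-k+1$ whose three lowest coefficients are $a_{k-1}(k-1)!$, $a_k\,k!$, and $a_{k+1}\frac{(k+1)!}{2}$. Since $a_{k-1}>0$, the constant term of $g$ is nonzero, so the reversal $\tilde g(x)=x^m g(1/x)$ is again real-rooted of degree $m$, and its three \emph{highest} coefficients are exactly those three numbers. Differentiating $\tilde g$ a further $m-2=n-k-1$ times then leaves a genuine quadratic $Q(x)=\alpha x^2+\beta x+\gamma$ with positive leading coefficient, whose coefficients are the three numbers above multiplied by explicit positive factorial factors. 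As $Q$ is real-rooted, its discriminant is nonnegative, $\beta^2\geq 4\alpha\gamma$; after substituting the factorial factors and dividing through by $(n!)^2$, this collapses to precisely the normalized Newton inequality stated above.

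Finally I would pass from the normalized to the plain inequality. Cross-multiplying gives $a_k^2\binom{n}{k-1}\binom{n}{k+1}\geq a_{k-1}a_{k+1}\binom{n}{k}^2$, and hence
\begin{align*}
a_k^2 \geq a_{k-1}a_{k+1}\,\frac{\binom{n}{k}^2}{\binom{n}{k-1}\binom{n}{k+1}} \geq a_{k-1}a_{k+1},
\end{align*}
where the last step uses $a_{k-1}a_{k+1}\geq 0$ together with \Cref{lem: binomial1}, which guarantees $\binom{n}{k}^2\geq \binom{n}{k-1}\binom{n}{k+1}$. Since $k$ was an arbitrary interior index, this establishes log-concavity of the whole coefficient sequence and completes the proof.

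I expect the only delicate points to be the bookkeeping in the quadratic reduction and the handling of degenerate configurations. The reversal is well-behaved only when the polynomial has no zero root, since a zero root would lower the degree and scramble the correspondence between coefficients; this is exactly why the reduction must be carried out in the regime $a_{k-1}>0$, where the constant term of $g$ is guaranteed nonzero, the remaining cases being the trivial ones already disposed of. Checking that the explicit factorial prefactors appearing in $Q$ collapse to the clean binomial form after dividing by $(n!)^2$ is a routine but careful computation, and it is the step where an error would most easily slip in.
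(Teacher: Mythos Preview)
Your argument is correct and is essentially the classical proof of Newton's inequalities as found in the references the paper cites. The paper itself does not supply a proof of \Cref{thm: newton}; it merely records the statement with attributions to \cite{Bre89} and \cite{HLP52} and then uses it as a black box. Your derivation---differentiating $k-1$ times, reversing, differentiating $n-k-1$ more times to collapse $f$ to a real-rooted quadratic, and reading off the normalized Newton inequality from the nonnegativity of the discriminant---is exactly the standard route in those sources, and the factorial bookkeeping you outline does collapse cleanly to $\tfrac{a_k^2}{\binom{n}{k}^2}\ge\tfrac{a_{k-1}a_{k+1}}{\binom{n}{k-1}\binom{n}{k+1}}$. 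Your closing step invoking \Cref{lem: binomial1} to pass from the normalized to the plain inequality is a nice internal link to the paper's own toolkit; the careful treatment of the degenerate cases ($a_{k-1}a_{k+1}=0$, and the requirement $a_{k-1}>0$ to ensure the reversal does not drop degree) is appropriate and the argument is complete.
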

\begin{remark}
	From the perspective of Galois theory, polynomial equations of degrees at most four admit general solutions in radicals, whereas no such formula exists for general equations of degree five or higher. Hence, in \Cref{thm: newton}, we may directly determine the real-rootedness for the polynomial equations of degrees at most four. However, for the higher degree, we need to use more general methods, such as Jacobi polynomials.
\end{remark}
Now, let us consider the expansion formula of $(x_1x_3+x_2+1)^a[x_1x_3+(x_2+1)^2]^b(x_2+1)^c$.
\begin{align}
		&(x_1x_3+x_2+1)^a[x_1x_3+(x_2+1)^2]^b(x_2+1)^c\\ =& \left[\sum\limits_{i=0}^a\binom{a}{i}x_1^{a-i}x_3^{a-i}(1+x_2)^{c+i}\right]\left[\sum\limits_{j=0}^b\binom{b}{j}x_1^{b-j}x_3^{b-j}(1+x_2)^{2j}\right]\\ =& \sum\limits_{i=0}^a\sum\limits_{j=0}^b\binom{a}{i}\binom{b}{j}x_1^{a+b-i-j}x_3^{a+b-i-j}(1+x_2)^{i+2j+c} \\ =& \sum\limits_{i=0}^a\sum\limits_{j=0}^b\sum\limits_{k=0}^{i+2j+c}\binom{a}{i}\binom{b}{j}\binom{i+2j+c}{k}x_1^{a+b-i-j}x_2^{k}x_3^{a+b-i-j}. 
\end{align}
Note that the exponents of $x_1$ and $x_3$ in each single term are the same. Hence, it directly implies the log-concavity and unimodality of this monomial in the direction $x_1$ and $x_3$. Then, we only need to consider the case for $x_2$. If we fix the exponents of $x_1$ and $x_3$, that is $i+j=N$ is fixed, the coefficient of the term $x_1^{a+b-N}x_2^{k}x_3^{a+b-N}$ is given by 
\begin{align}
	S_k\coloneqq \sum_{\substack{0 \le i \le a,\, 0 \le j \le b, \\ i+j=N} }\binom{a}{i}\binom{b}{j}\binom{N+j+c}{k}.
\end{align} It is direct that there is no internal zero of the sequence $\{S_k\}$. In the following, we aim to prove that $S_{k}^2\geq S_{k-1}S_{k+1}$. Note that $S_k$ is the coefficient of $x^k$ in the expansion of the polynomial 
\begin{align}
	\sum_{\substack{0 \le i \le a,\, 0 \le j \le b, \\ i+j=N} }\binom{a}{i}\binom{b}{j}(1+x)^{N+j+c}.  
\end{align} Then, we consider its generating function 
\begin{align}
	F(x)= \sum\limits_{k\geq 0}S_kx^k=(1+x)^{N+c}\sum\limits_{j=0}^N\binom{a}{N-j}\binom{b}{j}(1+x)^j. \label{eq: generating function}
\end{align} Without loss of generality, we might assume that $a\geq N$ and $b\geq N$. Otherwise, the sum in \eqref{eq: generating function} will degenerate to a simpler one since some coefficients become zero. Let $t=1+x$ and 
\begin{align}
	Q(t)=\sum\limits_{j=0}^N\binom{a}{N-j}\binom{b}{j}t^j.\label{eq: cf}
\end{align} Then, we have $F(x)=t^{N+c}Q(t)$. 

We now claim that all the roots of $Q(t)$ are real. Hence, all the roots of $F(x)$ are also real. In fact, we have $\binom{b}{j}=\frac{(-1)^j(-b)_{j}}{j!}$ and 
\begin{align}
	\frac{\binom{a}{N-j}}{\binom{a}{N}}=\frac{N!(a-N)!}{(N-j)!(a-N+j)!}=\frac{(-1)^j(-N)_j}{(a-N+1)_j},
\end{align} which implies that $\binom{a}{N-j}=\binom{a}{N} \frac{(-1)^j(-N)_j}{(a-N+1)_j}.$ With the help of Gauss hypergeometric function and the fact that $(-N)_{j}=0$ for any $j\geq N+1$, we obtain that 
\begin{align}
	Q(t)=\binom{a}{N}\sum\limits_{j=0}^N\frac{(-N)_j (-b)_j}{(a-N+1)_j}
\frac{t^j}{j!}=\binom{a}{N}{}_2F_1(-N,-b;a-N+1;t).
\end{align} By Pfaff transformation $(1)$ of \Cref{lem: Pfaff}, it holds that 
\begin{align}
	Q(t)=\binom{a}{N}(1-t)^N{}_2F_1(-N,a+b-N+1;a-N+1;\frac{t}{t-1}).
\end{align} Take $\alpha=a-N$ and $\beta=b-N$ in the explicit formula of Jacobi polynomial. Then, $\alpha+1=a-N+1$ and $1+\alpha+\beta+N=a+b-N+1$. Let $\frac{t}{t-1} =\frac{1-x}{2}$, so that $x=\frac{1+t}{1-t}$. Thus, we have 
\begin{align}
	Q(t)=\binom{a}{N}(1-t)^N\frac{N!}{(a-N+1)_N}P_N^{(a-N,b-N)}\left(\frac{1+t}{1-t}\right).
\end{align} By \Cref{lem: real roots}, all the roots of $Q(t)$ and $F(x)$ must be real roots. It is worth mentioning that $\frac{1+t}{1-t}\in \mbR$ can imply that $t\in \mbR$. Moreover, since all the coefficients of $F(x)$ are nonnegative, by \Cref{thm: newton}, we conclude that $S_{k}^2\geq S_{k-1}S_{k+1}$.

In conclusion, all the cluster monomials of the inward type cluster algebras of type $A_3$ are log-concave and unimodal.
\subsubsection{Straightforward type} Note that this type corresponds to \Cref{table straightfoward}. Hence, by direct observation, it is sufficient to prove that $(1+x_2)^a(x_1+x_3+x_2x_3)^b[(1+x_2)(x_1+x_3)]^c$ and $(x_1+x_3+x_2x_3)^a(x_1+x_3)^b[(1+x_2)(x_1+x_3)]^c$ are log-concave and unimodal for any $a,b,c \in \mbN$. 

Firstly, let us consider the expansion formula of $(1+x_2)^a(x_1+x_3+x_2x_3)^b[(1+x_2)(x_1+x_3)]^c$, which is given by 
\begin{align}
	& (1+x_2)^a(x_1+x_3+x_2x_3)^b[(1+x_2)(x_1+x_3)]^c \\  =& (1+x_2)^{a+c}\left[\sum_{i=0}^b\binom{b}{i}x_1^i(1+x_2)^{b-i}x_3^{b-i}\right]\left[\sum_{j=0}^{c}\binom{c}{j}x_1^{j}x_3^{c-j}\right] \\ =& \left[\sum_{i=0}^b\binom{b}{i}x_1^i(1+x_2)^{a+b+c-i}x_3^{b-i}\right]\left[\sum_{j=0}^{c}\binom{c}{j}x_1^{j}x_3^{c-j}\right] \\ =& \sum\limits_{i=0}^b\sum\limits_{k=0}^{a+b+c-i}\sum\limits_{j=0}^{c}\binom{b}{i}\binom{c}{j}\binom{a+b+c-i}{k}x_1^{i+j}x_2^{k}x_3^{b+c-i-j}.
\end{align} Note that the sum of the exponents of $x_1$ and $x_3$ is $b+c$. Hence, it directly implies the log-concavity and unimodality of this monomial in the direction $x_1$ and $x_3$. Then, we only need to consider the case for $x_2$. If we fix $i+j=N$, the coefficient of the term $x_1^{N}x_2^{k}x_3^{b+c-N}$ is given by 
\begin{align}
	T_k\coloneqq \sum_{\substack{0 \le i \le b,\, 0 \le j \le c, \\ i+j=N} }\binom{b}{i}\binom{c}{j}\binom{a+b+c-i}{k}.
\end{align} It is direct that there is no internal zero of the sequence $\{T_k\}$. In what follows, we aim to prove that $T_k^2\geq T_{k-1}T_{k+1}$. Note that $T_k$ is the coefficient of $x^k$ in the expansion of the polynomial 
\begin{align}
	\sum_{\substack{0 \le i \le b,\, 0 \le j \le c, \\ i+j=N} }\binom{b}{i}\binom{c}{j}(1+x)^{a+b+c-i}.  
\end{align} Hence, we can consider its generating function
\begin{align}
	F(x)= \sum\limits_{k\geq 0}T_kx^k=(1+x)^{a+b+c}\sum\limits_{i=0}^N\binom{b}{i}\binom{c}{N-i}(1+x)^{-i}. 
\end{align} Moreover, we have $F(x)=(1+x)^{a+b+c}Q(\frac{1}{1+x})$, where 
\begin{align}
	Q(t)=\sum\limits_{i=0}^N\binom{b}{i}\binom{c}{N-i}t^{i}. \end{align}
	By arguments analogous to those for the inward type (cf. \eqref{eq: cf}), we  conclude that all the roots of $F(x)$ are real, with the help of Jacobi polynomials. Hence, by \Cref{thm: newton}, we have $T_k^2\geq T_{k-1}T_{k+1}$, which implies the log-concavity and unimodality of the first case. 
	
Now, we turn to another monomial $(x_1+x_3+x_2x_3)^a(x_1+x_3)^b[(1+x_2)(x_1+x_3)]^c$. Its expansion formula is given by 
\begin{align}
	&(x_1+x_3+x_2x_3)^a(x_1+x_3)^b[(1+x_2)(x_1+x_3)]^c \\ =& \left[\sum_{i=0}^a\binom{a}{i}(x_1+x_3)^{b+c+i}x_2^{a-i}x_3^{a-i}\right]\left[\sum_{j=0}^{c}\binom{c}{j}x_2^{j}\right] \\ =& \sum\limits_{i=0}^a\sum\limits_{k=0}^{b+c+i}\sum\limits_{j=0}^{c}\binom{a}{i}\binom{b+c+i}{k}\binom{c}{j}x_1^{k}x_2^{a-i+j}x_3^{a+b+c-k}.
\end{align} Note that the sum of the exponents of $x_1$ and $x_3$ is $a+b+c$. Hence, it directly implies the log-concavity and unimodality of this monomial in the direction $x_1$ and $x_3$. Then, we only need to consider the case for $x_2$. Fix the exponent $k$ of $x_1$ and then the exponent of $x_3$ is also determined. There are two possible cases to be discussed as follows. 
\begin{enumerate}[leftmargin=2em]
	\item 
If $0\leq k\leq b+c$, the associated terms are 
\begin{align}
	\sum\limits_{i=0}^a\sum\limits_{j=0}^{c}\binom{a}{i}\binom{b+c+i}{k}\binom{c}{j}x_1^{k}x_2^{a-i+j}x_3^{a+b+c-k}.
\end{align} We consider the coefficient corresponding to each exponent of $x_2$. Without loss of generality, we might assume that $a\geq c$. Let $h$ denote the exponent of $x_2$. When $0\leq h\leq c$, there are $h+1$ sum terms in the coefficient of $x_1^{k}x_2^{h}x_3^{a+b+c-k}$, which is given by 
\begin{align}
	\alpha_h &=\sum\limits_{l=0}^h\binom{a}{a-h+l}\binom{a+b+c-h+l}{k}\binom{c}{l}\\ &=\sum\limits_{l=0}^h\binom{a}{h-l}\binom{a+b+c-h+l}{k}\binom{c}{l}.
\end{align} When $c\leq h\leq a$, there are always $c+1$ terms in the coefficient of $x_1^{k}x_2^{h}x_3^{a+b+c-k}$, which is given by 
\begin{align}
	\beta_h & =\sum\limits_{l=0}^c\binom{a}{a-h+l}\binom{a+b+c-h+l}{k}\binom{c}{l}\\ &=\sum\limits_{l=0}^c\binom{a}{h-l}\binom{a+b+c-h+l}{k}\binom{c}{l}.
\end{align} 
When $a\leq h\leq a+c$, there are $a+c-h+1$ sum terms in the coefficient of $x_1^{k}x_2^{h}x_3^{a+b+c-k}$, which is given by 
\begin{align}
	\gamma_h=\sum\limits_{l=0}^{a+c-h}\binom{a}{l}\binom{b+c+l}{k}\binom{c}{h-a+l}.
\end{align} It can be checked directly that $\alpha_c=\beta_c$ and $\beta_a=\gamma_a$. Since $\binom{c}{l}=0$ for any $l\geq c+1$ and $l\leq-1$, we can unify $\alpha_h,\beta_h,\gamma_h$ by using $\theta_h$ as follows:
\begin{align}
	\theta_h =\sum\limits_{l=0}^h\binom{a}{h-l}\binom{a+b+c-h+l}{k}\binom{c}{l},\ 0\leq h\leq a+c.
\end{align} Denote the two sequences $\mfu=\{u_r\}$ and $\mfv=\{v_r\}$ respectively by 
\begin{align}
	u_r=\binom{a}{r}\binom{a+b+c-r}{k},\ v_r=\binom{c}{r},\ r\in \mbZ_{\geq 0}.
\end{align} By \Cref{lem: binomial1} and \Cref{lem: binomial2}, $\mfu=\{u_r\}$ and $\mfv=\{v_r\}$ are log-concave and have no internal zeros. Hence, based on \Cref{prop: LU}, their convolution $\mfu  * \mfv$ is log-concave and unimodal. Note that \begin{align}
	\theta_h =\sum\limits_{l=0}^hu_{h-l}v_l,\end{align} which implies that $\{\theta_h\}$ is a continuous subsequence of $\mfu * \mfv$. Then, we have $\theta_h^2\geq \theta_{h-1}\theta_{h+1}$ and $\{\theta_h\}$ is unimodal.

\item If $k=b+c+p$ with $1\leq p\leq a$, the associated terms are 
\begin{align}
	\sum\limits_{i=p}^a\sum\limits_{j=0}^{c}\binom{a}{i}\binom{b+c+i}{b+c+p}\binom{c}{j}x_1^{b+c+p}x_2^{a-i+j}x_3^{a-p}.
\end{align} However, the arguments of the log-concavity and unimodality are totally similar to those for the case  $0\leq k\leq b+c$. The only differences are the initial value of $i$ and the value of $k$. Here, we omit the repeating proof but give a special example. When $k=a+b+c$, that is $p=a$, the associated terms are
\begin{align}
	\sum\limits_{j=0}^{c}\binom{c}{j}x_1^{a+b+c}x_2^{j}.
\end{align} This directly implies the log-concavity and unimodality in the direction $x_2$.
\end{enumerate}

In conclusion, all the cluster monomials of the straightforward type cluster algebras of type $A_3$ are log-concave and unimodal.
\subsubsection{Cyclic type} Note that this type corresponds to \Cref{table (3)}. Without loss of generality, we only need to prove that 
$(x_2+x_3)^a(x_1+x_2+x_3)^b$ is log-concave and unimodal for any $a,b\in \mbN$. By similar statements, we can prove that $(x_1+x_2)^a(x_1+x_2+x_3)^b$ and $(x_1+x_3)^a(x_1+x_2+x_3)^b$ are both log-concave and unimodal. Thus, it implies that the cluster monomials of the cyclic type cluster algebra of type $A_3$ are log-concave and unimodal.

Consider the expansion formula of $(x_2+x_3)^a(x_1+x_2+x_3)^b$ as follows.
\begin{align}
& (x_2+x_3)^a(x_1+x_2+x_3)^b\\  =&(x_2+x_3)^a[x_1+(x_2+x_3)]^b \\ =&\sum\limits_{i=0}^b\binom{b}{i}x_1^{b-i}(x_2+x_3)^{a+i} \\ =& 
\sum\limits_{i=0}^b\sum\limits_{j=0}^{a+i}\binom{b}{i}\binom{a+i}{j}x_1^{b-i}x_2^{j}x_3^{a+i-j}.
\end{align} If we fix the exponents of $x_1$ and $x_3$, the exponent of $x_2$ is fixed automatically. Similarly, if we fix the exponents of $x_2$ and $x_3$ (or $x_1$ and $x_2$), the exponent of $x_1$ (or $x_3$) is fixed automatically. Hence, there are no three consecutive terms associated with one variable and it is direct that $(x_2+x_3)^a(x_1+x_2+x_3)^b$ is log-concave and unimodal.

In conclusion, all the cluster monomials of the cyclic type cluster algebras of type $A_3$ are log-concave and unimodal.
\begin{remark}
	In \cite[Theorem 7.20]{GHKK18}, it was proved that all the distinct cluster monomials are linearly independent over $\mbZ$. When the cluster algebra is of finite type, the \emph{theta functions} defined in \cite{GHKK18} are strictly cluster monomials, see also \cite{Nak23}. They encode deep positivity and mirror symmetry properties. Based on \Cref{thm: main result}, it is direct that all the theta functions of type $A_3$ are log-concave and unimodal.
\end{remark}

\subsection{A refined conjecture} In this subsection, based on the results as above, we give a refined conjecture of \Cref{An conj}.

	Note that any two cluster algebras of finite type  (e.g. type $A_n$) are \emph{strongly isomorphic} by taking distinct initial seeds, see \cite{FZ02}. More precisely, it means that there
exists a $\mbZ$-algebra isomorphism $\mcF \rightarrow \mcF^{\prime}$,  sending some seed in $\mathbf{\Sigma}$ into a seed in $\mathbf{\Sigma}^{\prime}$, thus inducing
a bijection $\mathbf{\Sigma} \rightarrow \mathbf{\Sigma}^{\prime}$ of seeds and an algebra isomorphism. We have known that the log-concavity and unimodality of cluster monomials of type $A_3$ are independent of the choice of the initial seed. Hence, it is natural to consider the same property for the higher rank, even though it is still mysterious. 
\begin{remark}\label{rmk: higher rank}
	It is worth emphasizing that the arguments developed for type $A_3$ cannot be extended directly to type $A_4$ or higher. In higher rank, the combinatorial and algebraic structures involved become substantially more complicated, and most technical methods specific to the $A_3$ case are no longer available. To some extent, this also highlights both the importance and the intrinsic uniqueness of this technique in type $A_3$.
\end{remark}
As a consequence of \Cref{thm: main result} and \Cref{rmk: higher rank}, we refine and extend  \Cref{An conj} as follows.
\begin{conjecture}\label{general conj} We conjecture that the following two statements hold.
	\begin{enumerate}
	\item The cluster monomials of type $A_n$ with $n\geq 4$ are log-concave and unimodal.
		\item The strongly isomorphism of cluster algebras keeps the log-concavity and unimodality of cluster monomials of type $A_n$ with $n\geq 4$. Namely, the log-concavity and the unimodality of cluster monomials are independent of the choice of the initial seed.
	\end{enumerate}
\end{conjecture}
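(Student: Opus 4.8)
The plan is to mirror the architecture of the proof of \Cref{thm: main result} and then isolate precisely where it ceases to apply. For part (1), the first step is the reduction. The permutation action and its compatibility with mutation (\Cref{lem: permutation}), together with the sign symmetry (\Cref{lem: sign}), hold verbatim for every rank $n$, so one may again pass to unlabeled seeds and then to a finite list of reduced initial quivers, namely the orientations of the type $A_n$ Dynkin diagram taken up to reflection of the path and up to reversing all arrows. This replaces \Cref{prop: classification} by a finite but exponentially growing set of cases, one for each such orientation. Within each reduced case, the arcs of a triangulation of the $(n+3)$-gon give the cluster variables, and the $T$-path expansion of \cite{FST08, Sch08} expresses the numerator of an arbitrary cluster monomial as an explicit product of binomial-type factors in the initial variables.

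The second step is to control the one-dimensional coefficient sequences. After fixing the exponents of all but one initial variable, the relevant coefficient sequence is, exactly as in the $A_3$ argument, the coefficient sequence of a generating polynomial $Q(t)$ read off from that product expansion. The aim is to show $Q(t)$ is real-rooted and then invoke the Newton inequalities (\Cref{thm: newton}) for log-concavity and \Cref{lem: log is uni} for unimodality; where real-rootedness is unavailable, one would fall back on the convolution machinery of \Cref{prop: LU}, recognizing the coefficient sequence of $Q$ as a finite convolution of the log-concave binomial sequences $\binom{p}{\cdot}$ and re-running the Cauchy--Binet/total-positivity argument there.

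The hard part will be exactly this second step. In rank $3$ the polynomial $Q(t)$ collapsed onto a single Gauss hypergeometric ${}_2F_1$, hence onto a Jacobi polynomial $P_N^{(\alpha,\beta)}$, whose roots are real by \Cref{lem: real roots}. For $n\geq 4$ a third (and then further) summation index survives the reduction, so the generating polynomial becomes a genuinely multi-parameter sum that is no longer a single classical orthogonal polynomial; its natural closed form is a higher ${}_pF_q$ or a product of several ${}_2F_1$'s, and such polynomials need not be real-rooted. One therefore cannot expect Newton's inequalities to apply directly, and since log-concave sequences need not be real-rooted this is a genuine obstacle rather than a technical gap. Two routes seem most promising: (a) prove that real-rootedness is \emph{preserved} along the surviving convolutions and products, via a P\'olya--Schur multiplier-sequence or interlacing argument that reduces to the $A_3$ building blocks; or (b) abandon real-rootedness entirely and establish log-concavity combinatorially, iterating \Cref{prop: LU} so that every surviving summation index is absorbed into a convolution of provably log-concave, internal-zero-free sequences. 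Route (b) is the more robust, but it requires verifying that the factors produced by the $T$-path expansion are themselves log-concave with no internal zeros in each direction, which is the crux to be checked case by case.

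For part (2), once part (1) is established for \emph{every} reduced initial quiver, seed-independence is essentially formal. Any two initial seeds of a type $A_n$ cluster algebra are mutation-equivalent, and a strong isomorphism $\mcF\to\mcF'$ in the sense of \cite{FZ02} carries cluster monomials to cluster monomials and initial seeds to initial seeds. Expanding a fixed cluster monomial with respect to a new initial cluster $\mfx'$ is therefore the same as expanding a cluster monomial of the corresponding cluster algebra $\mcA'$ with respect to its initial cluster, a situation covered by part (1) applied to the reduced quiver of $\mfx'$. Combined with the permutation and sign compatibilities (\Cref{lem: permutation}, \Cref{lem: sign}), this shows that log-concavity and unimodality are intrinsic to the monomial and independent of the chosen initial seed, so that (2) follows from (1).
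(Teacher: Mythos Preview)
This statement is \Cref{general conj}, an open \emph{conjecture}; the paper offers no proof of it, and \Cref{rmk: higher rank} explicitly warns that the $A_3$ arguments cannot be extended to $A_4$ or higher. There is therefore nothing in the paper to compare your proposal against.

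What you have written is a strategy outline, not a proof, and you yourself flag the central gap: once a further summation index survives, the generating polynomial is no longer a single Jacobi polynomial and real-rootedness is lost. Neither of your routes (a) or (b) is actually carried out. Route (b) is also more delicate than you suggest: \Cref{prop: LU} handles a single convolution, but after one convolution the resulting sequence depends on the remaining parameters, and iterating requires log-concavity of that two-parameter family in the next index, which is not what \Cref{prop: LU} delivers.

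There is also a factual slip in your reduction step. The reduced initial quivers are not merely the orientations of the $A_n$ Dynkin path. The mutation class of type $A_n$ contains every quiver coming from a triangulation of the $(n+3)$-gon, including quivers with oriented $3$-cycles; this is already visible for $n=3$ in \Cref{three cases}(3). Up to permutation and sign there are far more cases than path orientations.

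Your derivation of (2) from (1) is logically correct but tautological: if (1) is established for every initial seed, seed-independence is immediate. The independent interest of (2) is whether log-concavity and unimodality can be transferred along a strong isomorphism \emph{without} first proving (1) for each reduced quiver, and your argument does not address this.
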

\section*{Acknowledgment}
We would like to sincerely thank Zhe Sun for his thoughtful guidance and valuable suggestions. We are grateful to Guanhua Huang and Yilin Wu for many helpful discussions. We also wish to thank Peigen Cao, Xiaowu Chen, Tomoki Nakanishi and Yu Ye for their help and support. This work is supported by National Natural Science Foundation of China (Grant No. 124B2003) and China Scholarship Council (Grant No. 202406340022).

\end{document}